\newtheorem{thm}{Theorem}[section]
\newtheorem{lem}[thm]{Lemma}
\newtheorem{prop}[thm]{Proposition}
\theoremstyle{definition}
\theoremstyle{remark}
\newtheorem{rem}[thm]{Remark}
\numberwithin{equation}{section}
\numberwithin{thm}{section}
\newcommand{\ed}{\end {document}}
\newcounter{smalllist}
\title[optimal convergence rates for turbulent flow equations]{Optimal convergence rates for the three-dimensional turbulent flow equations}
\author[D. Bian]{Dongfen Bian}
\address{The Graduate School of China Academy of Engineering Physics,
P. O. Box 2101,\ Beijing 100088,\ PR China,}%
\email{bian\_dongfen@mail.com}
\author[B. Guo]{Boling Guo}
\address{Institute of Applied Physics and Computational Mathematics,
 P. O. Box 8009,\ Beijing 100088,\ PR China.}
\email{gbl@iapcm.ac.cn}
\begin{document}
\maketitle
\begin{abstract}
In this paper we are concerned with the convergence rate of solutions to
 the three-dimensional turbulent flow equations.
 By combining the $L^p$-$L^q$ estimates for the linearized equations and an elaborate energy method, the convergence rates are obtained in various norms for the solution to the equilibrium state in the whole space, when
 the initial perturbation of the equilibrium state is small in $H^3$-framework. More precisely, the optimal convergence rates of the solutions and its first order derivatives in $L^2$-norm are obtained when the $L^p$-norm of the perturbation is bounded for some $p\in[1,\frac{6}{5})$.

 \noindent{\bf AMS Subject Classification 2000:}\quad 35Q35, 65M12, 76F60, 93D20.
\end{abstract}
 \vspace{.2in} {\bf  {Key words and phrases:}}{
Turbulent flow equations; $k$-$\varepsilon$ model; optimal convergence rate; energy estimates.}

\section{Introduction}

We consider in this work the turbulent flow equations for compressible flows on $\mathbb{R}^3$,
\begin{align}\label{MHD}
\begin{cases}
\rho_t+ \mbox{div}(\rho u)=0,\\
(\rho u)_t+ \mbox{div}(\rho u\otimes u)-\Delta u-\nabla\mbox{div}u+\nabla p=-\frac{2}{3}\nabla(\rho k),\\
(\rho h)_t+\mbox{div}(\rho u h)-\Delta h=\frac{Dp}{Dt}+S_k,\\
(\rho k)_t+ \mbox{div}(\rho u k)-\Delta k=G-\rho \varepsilon,\\
(\rho \varepsilon)_t+ \mbox{div}(\rho u \varepsilon)-\Delta\varepsilon
=\frac{C_1G \varepsilon}{k}-\frac{C_2 \rho \varepsilon^2}{k},\\
(\rho, u, h, k,\varepsilon)(x,t)|_{t=0}=(\rho_0(x), u_0(x), h_0(x), k_0(x), \varepsilon_0(x)),
\end{cases}
\end{align}
 with $S_k=[\mu(\frac{\partial u^i}{\partial x_j}+\frac{\partial u^j}{\partial x_i})
 -\frac{2}{3}\delta_{ij}\mu\frac{\partial u^k}{\partial x_k}]\frac{\partial u^i}{\partial x_j}
 +\frac{\mu_t}{\rho^2}\frac{\partial p}{\partial x_j}\frac{\partial \rho}{\partial x_j}$, $G
 =\frac{\partial u^i}{\partial x_j}[\mu_e(\frac{\partial u^i}{\partial x_j}
 +\frac{\partial u^j}{\partial x_i})-\frac{2}{3}\delta_{ij}(\rho k+\mu_e\frac{\partial u^k}{\partial x_k})]$,
  where $\delta_{ij}$ is given by $\delta_{ij}=0$ if $i\neq j$, $\delta_{ij}=1$ if $i=j$,
   dynamic viscosity $\mu$ and the eddy viscosity $\mu_t$ are positive constants satisfying $\mu+\mu_t=\mu_e$, and $C_1$,  $C_2$ are also two adjustable positive constants.

 Here $\rho,\  u,\  h,\ k$ and $\varepsilon$
 denote the density, velocity, total enthalpy, turbulent kinetic energy and rate of viscous dissipation, respectively.
  The pressure $p$ is a smooth function of $\rho$. In this paper, without loss of generality,
  we have renormalized some constants to be 1. The system (\ref{MHD}) is formed by combining effect of turbulence on
 time-averaged Navier-Stokes equations with the $k$-$\varepsilon$ model equations.

 All flows encountered in engineering practice, both simple ones such as two-dimensional jets, wakes, pipe flows and flat plate boundary layers and more complicated three-dimensional ones, become unstable above a certain Reynolds number. At low Reynolds numbers flows are laminar. Flows in the laminar regime are described by the continuity and Navier-Stokes equations which have been studied by many people \cite{Abidi1, can, can2, can3, c1, d1, d5, d7, f1, giga1, giga2, giga3, iwashita, kozono, kato, o-v, lions1, Nishita, mey, xin}.
 At high Reynolds numbers flows are observed to become turbulent. A chaotic and random state of motion develops in which the velocity and pressure change continuously with time within substantial regions of flow. More precisely, at values of the Reynolds number above $Re_{crit}$ a complicated series of evens takes place which eventually leads to a radical change of the flow character. In the final state the flow behavior is random and chaotic. The motion becomes intrinsically unsteady even with constant imposed boundary conditions. The velocity and all other flow properties vary in a random and chaotic way.
 Turbulence stands out as a prototype of multi-scale phenomenon that occurs in nature.
 It involves wide ranges of
 spatial and temporal scales which makes it very difficult to study analytically
 and prohibitively expensive to simulate computationally.
 Many, if not most, flows of engineering significance are turbulent, so the
turbulent flow regime is not just of theoretical interest. Up to now, although many physicists and mathematicians studied turbulent flows, there are not any general theory suitable for them.
Fluid engineers need access to viable tools capable of representing the effects of turbulence.

 This paper is devoted to study decay rates for the system \eqref{MHD} and proves the optimal convergence rates of its solutions under suitable assumptions. Bian-Guo \cite{bian} has obtained the global existence of smooth solutions to the system \eqref{MHD} under the condition that
 the initial data are close to the equilibrium state in $H^3$-framework.
More precisely, this result is expressed in the following.
%%%%%%%%%%%%%%%%%%%%theorem 1.1

\begin{prop}\label{thm1}
Assume that initial data are close enough to the constant state
$(\bar{\rho},0,0,\bar{k},0)$, i.e. there exists a constant $\delta_0$ such that
if
\begin{equation} \|(\rho_0-\bar{\rho}, u_{0}, h_0, k_0-\bar{k}, \varepsilon_0)\|_{H^3(\mathbb{R}^3)}\leq \delta_0,
 \end{equation}
 then the system
(\ref{MHD}) admits a unique smooth solution $(\rho,u,h,k,\varepsilon)$ such that for any
$t\in [0,\infty)$,
\begin{equation}\label{propc1.7}
\begin{split}
&\|(\rho-\bar{\rho}, u, h, k-\bar{k}, \varepsilon)\|_{H^{3}}^2
+\int_0^t\|\nabla \rho\|_{H^2}^2+\|(\nabla u, \nabla h,\nabla k, \nabla \varepsilon)\|_{H^3}^2\mbox{d}s\\
&\leq
C\|(\rho_0-\bar{\rho},u_0, h_0, k_0-\bar{k}, \varepsilon_0)\|_{H^{3}}^2,
\end{split}
\end{equation}
where $C$ is a positive constant.
\end{prop}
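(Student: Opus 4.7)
The plan is to establish Proposition~\ref{thm1} by the standard scheme of local existence, a uniform \emph{a priori} estimate in $H^3$, and continuation. First, rewrite the system around the equilibrium by setting $\sigma = \rho - \bar{\rho}$, $\theta = k - \bar{k}$ and expanding $p(\rho) = p(\bar{\rho}) + p'(\bar{\rho})\sigma + O(\sigma^2)$. The leading linear part is a symmetrizable hyperbolic--parabolic system for $(\sigma,u)$ of Matsumura--Nishida type, coupled to parabolic equations for $h$, $\theta$, $\varepsilon$. Local well-posedness in $H^3$ then follows from a contraction argument on a linearized/mollified system. Everything reduces to the uniform estimate~\eqref{propc1.7} under the bootstrap hypothesis
\[
\sup_{t\in[0,T]}\|(\sigma,u,h,\theta,\varepsilon)(t)\|_{H^3}\le \delta
\]
for a small constant $\delta>0$; by $H^3\hookrightarrow L^\infty$ this guarantees $\rho\ge\bar{\rho}/2$ and $k\ge\bar{k}/2$ pointwise, so the quotients $G\varepsilon/k$ and $\rho\varepsilon^2/k$ become smooth nonlinearities of the perturbation variables.

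The a priori estimate is obtained by a Matsumura--Nishida energy method in two stages. In the first stage, for each multi-index $|\alpha|\le 3$, apply $\partial^\alpha$ to each equation, multiply by the corresponding derivative of the unknown (with weight $p'(\bar{\rho})/\bar{\rho}$ in the continuity--momentum pairing so that the linear pressure couplings cancel), integrate, and sum. Standard Moser and commutator estimates together with $\|\nabla u\|_{L^\infty}\lesssim\|\nabla u\|_{H^2}$ bound the cubic-and-higher remainders from $G$, $S_k$, and the $\varepsilon$-source terms by $\delta$ times the available dissipation, yielding
\[
\tfrac{d}{dt}\|(\sigma,u,h,\theta,\varepsilon)\|_{H^3}^2 + c\,\|(\nabla u,\nabla h,\nabla\theta,\nabla\varepsilon)\|_{H^3}^2 \le C\delta\,\|\nabla\sigma\|_{H^2}^2,
\]
with no direct dissipation for $\nabla\sigma$ because the continuity equation is pure transport. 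In the second stage, I recover $\nabla\sigma$ dissipation by testing $\partial^\alpha$ of the momentum equation against $\partial^\alpha \nabla\sigma/\bar{\rho}$ for $|\alpha|\le 2$: the pressure term produces $p'(\bar{\rho})\|\nabla\sigma\|_{H^2}^2$, the $\partial_t u$ contribution is integrated by parts in time and the resulting $\partial_t\nabla\sigma = -\bar{\rho}\nabla\,\mathrm{div}\,u+\ldots$ is substituted from the continuity equation, and the remaining terms are absorbed into $\|u\|_{H^3}^2$-dissipation via Cauchy--Schwarz. A large linear combination of the two stages, combined with Young's inequality for small $\delta$, then produces
\[
\tfrac{d}{dt}\mathcal{E}(t) + c\,\mathcal{D}(t) \le 0,
\]
where $\mathcal{E}(t)\sim \|(\sigma,u,h,\theta,\varepsilon)\|_{H^3}^2$ and $\mathcal{D}(t)\sim \|\nabla\sigma\|_{H^2}^2+\|(\nabla u,\nabla h,\nabla\theta,\nabla\varepsilon)\|_{H^3}^2$. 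Integrating gives~\eqref{propc1.7}, closes the bootstrap for $\delta_0$ sufficiently small, and standard continuation extends the solution to $[0,\infty)$.

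The main obstacle I anticipate is the careful bookkeeping of the source terms in the $k$- and $\varepsilon$-equations, especially $C_1 G\varepsilon/k$ and $C_2\rho\varepsilon^2/k$. At top order $|\alpha|=3$, commuting $\partial^\alpha$ past the smooth function $1/(\bar{k}+\theta)$ and past $G$ (itself quadratic in $\nabla u$) produces commutators with products of up to four derivatives of the unknowns; these are controlled only by combining Gagliardo--Nirenberg interpolation, the embedding $H^3\hookrightarrow W^{1,\infty}$, and the smallness of $\delta$. A secondary nuisance is the term $(\mu_t/\rho^2)\,\partial_j p\,\partial_j\rho$ in $S_k$, which is quadratic in $\nabla\sigma$ and must be absorbed using the $\nabla\sigma$-dissipation recovered in the second stage rather than the parabolic dissipation of $h$. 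Once these nonlinear estimates are executed cleanly, the hyperbolic--parabolic energy structure of the $(\sigma,u)$-block---the only genuinely delicate linear feature of the system---is entirely standard.
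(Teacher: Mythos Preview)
The paper does not itself prove Proposition~\ref{thm1}; it is quoted from the authors' companion work~\cite{bian}, so there is no in-paper argument to compare against. Your overall scheme---local existence, a Matsumura--Nishida $H^3$ energy estimate with a cross term $\langle\partial^\alpha u,\nabla\partial^\alpha\sigma\rangle$ to recover density dissipation, and continuation---is the natural route and is presumably what~\cite{bian} carries out.

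There is, however, a genuine gap in your proposal. You assert that after isolating the $(\sigma,u)$ hyperbolic--parabolic block and the heat operators on $h,\theta,\varepsilon$, all remaining sources are ``cubic-and-higher'' and hence bounded by $C\delta$ times the dissipation. That is not true: several sources are \emph{linear} in the perturbation and carry no factor of $\delta$. Concretely, $-\tfrac{2}{3}\nabla(\rho k)$ contributes the linear piece $-\tfrac{2}{3}\bar\rho\,\nabla\theta$ to the momentum equation; $Dp/Dt=-p'(\rho)\rho\,\mathrm{div}\,u$ contributes $-p'(\bar\rho)\bar\rho\,\mathrm{div}\,u$ to the $h$-equation; and $-\rho\varepsilon$ contributes $-\bar\rho\,\varepsilon$ to the $\theta$-equation (these are precisely the terms the present paper parks inside $F_2,F_3,F_4$ of~\eqref{ke-new}). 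Your stage-one inequality therefore cannot hold as written. The $\theta$--$\varepsilon$ coupling is especially awkward: at the linear level the pair satisfies $\theta_t-\lambda\Delta\theta=-\varepsilon$, $\varepsilon_t-\lambda\Delta\varepsilon=0$, whose Fourier symbol at each frequency is a $2\times2$ Jordan block with repeated eigenvalue $-\lambda|\xi|^2$. Consequently no fixed positive-definite quadratic form is a Lyapunov functional for it (letting $|\xi|\to0$ forces the off-diagonal obstruction), and indeed $\hat\theta(t,\xi)=e^{-\lambda|\xi|^2 t}(\hat\theta_0-t\,\hat\varepsilon_0)$ permits growth of $\|\theta(t)\|_2$. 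Closing~\eqref{propc1.7} thus requires more than the two-stage energy combination you describe: one must either argue hierarchically (close $\varepsilon$ first, feed its control into $\theta$, then into $u$ and $h$), or enlarge the linear operator to include these couplings and replace the pure energy identity by semigroup/Fourier bounds. This, rather than the commutator bookkeeping for $G$, $S_k$, and $1/(\bar k+\theta)$, is the step that separates the system from the compressible Navier--Stokes template, and your outline does not address it.
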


Based on this stability result, the main purpose in this paper is to investigate the optimal convergence rates in time to the stationary solution. We remark that the convergence rate is an important topic in the study of the fluid dynamics for the purpose of the computation \cite{ene, goubet}.  The main idea in this paper is to combine the $L^p$-$L^q$ estimates for the linearized equations and an improved energy method which includes the estimation on the higher power of $L^2$-norm of solutions. By doing this, the optimal convergence rates for the solutions to the nonlinear problem \eqref{MHD} in various norms can be obtained and are stated in the following theorem.

%%%%%%%%%%%%%%%%%%%%theorem 1.1

\begin{thm}\label{thm2}
Let $\delta_0$ be the constant defined in Proposition \ref{thm1}. There exist constants $\delta_1\in(0,\delta_0)$ and $C>0$ such that the following holds. For any $\delta\leq\delta_1$, if
\begin{equation} \|(\rho_0-\bar{\rho}, u_{0}, h_0, k_0-\bar{k}, \varepsilon_0)\|_{H^3(\mathbb{R}^3)}\leq \delta,
 \end{equation}
 and for some $p\in[1, \frac{6}{5})$,
 \begin{align}\label{1.9}
 \rho_0-\bar{\rho}, u_{0}, h_0, k_0-\bar{k}, \varepsilon_0\in L^p(\mathbb{R}^3),
 \end{align}
 then the smooth solution $(\rho,u,h,k,\varepsilon)$ in Proposition \ref{thm1} enjoys the estimates for
$t\in [0,\infty)$,
\begin{equation}\label{decay1}
\|(\rho-\bar{\rho}, u, h, k-\bar{k}, \varepsilon)\|_{q}
\leq
C(1+t)^{-\sigma(p,q;0)},\ \ 2\leq q\leq 6,
\end{equation}
\begin{equation}\label{decay2}
\|(\rho-\bar{\rho}, u, h, k-\bar{k}, \varepsilon)\|_{\infty}
\leq
C(1+t)^{-\sigma(p,2;1)},
\end{equation}
\begin{equation}\label{decay3}
\|\nabla(\rho-\bar{\rho}, u, h, k-\bar{k}, \varepsilon)\|_{H^2}
\leq
C(1+t)^{-\sigma(p,2;1)},
\end{equation}
\begin{equation}\label{decay4}
\|(\rho_t, u_t, h_t, k_t, \varepsilon_t)\|_{2}
\leq
C(1+t)^{-\sigma(p,2;1)},
\end{equation}
where $\sigma(p,q;l)$ are defined by
\begin{align}\label{sigmadefinition}
\sigma(p,q;l)=\frac{3}{2}(\frac{1}{p}-\frac{1}{q})+\frac{l}{2}.
\end{align}
\end{thm}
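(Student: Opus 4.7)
The plan is to combine $L^p$--$L^q$ decay estimates for the linearized semigroup with the uniform $H^3$ energy bound provided by Proposition \ref{thm1}, in a Duhamel bootstrap. Let $\varrho = \rho - \bar{\rho}$, $\kappa = k - \bar{k}$, and $U = (\varrho, u, h, \kappa, \varepsilon)^\top$. The first step is to recast \eqref{MHD} as
\[
U_t - \mathcal{L}U = \mathcal{N}(U),
\]
where $\mathcal{L}$ is the constant-coefficient operator obtained by linearizing at $(\bar{\rho}, 0, 0, \bar{k}, 0)$ and $\mathcal{N}(U)$ collects all quadratic and higher corrections (the convective terms $\mbox{div}(\varrho u)$ and $\mbox{div}(\rho u\otimes u)$, the pressure correction $\nabla[p(\rho) - p(\bar{\rho}) - p'(\bar{\rho})\varrho]$, the material derivative $Dp/Dt$, the source $S_k$, the generation $G$, and the reaction terms $\varrho\varepsilon$ and $C_2\rho\varepsilon^2/k$). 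A Fourier/spectral analysis of $\mathcal{L}$ shows that the $(\varrho, u, \kappa)$-block behaves like the compressible barotropic Navier--Stokes operator (hyperbolic--parabolic damping, with coupling through $\bar{k}\nabla\varrho + \bar{\rho}\nabla\kappa$ in the momentum equation), while $h$ and $\varepsilon$ decouple as pure parabolic diffusions and $\kappa$ is driven by a triangular reaction $-\bar{\rho}\varepsilon$ whose rate is paced by that of $\varepsilon$. Diagonalizing block by block yields the standard semigroup bound
\[
\|\partial_x^\alpha e^{t\mathcal{L}} U_0\|_{L^q} \leq C(1+t)^{-\sigma(p,q;|\alpha|)}\bigl(\|U_0\|_{L^p} + \|\partial_x^\alpha U_0\|_{L^2}\bigr), \qquad 1\le p \le 2 \le q \le \infty.
\]

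By Duhamel's principle,
\[
U(t) = e^{t\mathcal{L}} U_0 + \int_0^t e^{(t-s)\mathcal{L}} \mathcal{N}(U(s))\,ds,
\]
so I would introduce the decay functional
\[
M(t) = \sup_{0\le s \le t}\,\sum_{j=0}^{1}(1+s)^{\sigma(p,2;j)}\|\nabla^j U(s)\|_{L^2},
\]
and aim to establish $M(t) \le C(\delta + \|U_0\|_{L^p}) + CM(t)^2$, which closes by smallness. The linear piece is controlled directly by the semigroup bound. In the nonlinear integral I would exploit the divergence structure of $\mathcal{N}(U)$ whenever possible, so that one spatial derivative lands on the kernel and the estimate for $e^{(t-s)\mathcal{L}}\nabla$ gains an extra factor $(1+t-s)^{-1/2}$; the resulting $L^p$ norm is then controlled by H\"older, e.g.\ $\|U\nabla U\|_{L^p} \lesssim \|U\|_{L^2}\|\nabla U\|_{L^{2p/(2-p)}}$, with the latter factor dominated by $\|\nabla U\|_{H^1}$ via Sobolev embedding (admissible since $2p/(2-p)\le 6$ for $p \le 3/2$). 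Combined with the uniform $H^3$ bound from Proposition \ref{thm1}, this yields the bootstrap inequality.

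The higher-order decay \eqref{decay3} is then upgraded via a weighted energy estimate: for $j = 1, 2, 3$,
\[
\frac{d}{dt}\|\nabla^j U\|_{L^2}^2 + c\,\|\nabla^{j+1}(\text{diffusive components})\|_{L^2}^2 \lesssim \|U\|_{W^{1,\infty}}\|\nabla^j U\|_{H^1}^2,
\]
into which I insert the already-established decay of $\|U\|_{W^{1,\infty}}$ (through Gagliardo--Nirenberg from the rates of $\|U\|_{L^2}$ and $\|\nabla^2 U\|_{L^2}$) and integrate. The intermediate rates \eqref{decay1} then follow by interpolating between the $L^2$ rate and $\|U\|_{L^6}\lesssim \|\nabla U\|_{L^2}$, \eqref{decay2} from $\|U\|_{L^\infty}\lesssim \|U\|_{L^2}^{1/4}\|\nabla^2 U\|_{L^2}^{3/4}$, and \eqref{decay4} is read directly off the equations \eqref{MHD} by writing the time derivatives in terms of spatial derivatives and products of $U$ whose decay is already known.

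The main obstacle is closing the bootstrap in the borderline range $p\to 6/5^-$, where $\sigma(p,2;1)\to 1$ and the Duhamel integral $\int_0^t(1+t-s)^{-\sigma(p,2;1)}\|\mathcal{N}(U(s))\|_{L^p\cap L^2}\,ds$ becomes only barely integrable. This forces every quadratic piece of $\mathcal{N}(U)$ to be put into divergence form with care, and the slower-decaying factor in each product to be assigned to the $L^2$ slot so that the faster-decaying $\nabla U$ absorbs the loss. A secondary difficulty is the $\kappa$--$\varepsilon$ coupling: since $\varepsilon$ satisfies merely a heat equation at the linear level, one must propagate the bootstrap on the full vector $U$ simultaneously and verify that the triangular reaction $-\bar{\rho}\varepsilon$ in the $\kappa$-equation does not leak a slower rate into $\kappa$.
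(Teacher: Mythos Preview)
Your outline follows the standard Duhamel bootstrap used for compressible Navier--Stokes with $L^1$ data, but it diverges from the paper's proof in a way that leaves a genuine gap precisely at the point you yourself flag as ``the main obstacle.''

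The difficulty is that your quadratic bootstrap $M(t)\le C+CM(t)^2$, with $M$ built only from $\|U\|_2$ and $\|\nabla U\|_2$, does not close at the optimal rate $\sigma(p,2;1)$. In the Duhamel integral for $\|\nabla U\|_2$ the source is $\|\mathcal N(U)\|_{L^p\cap H^1}$, and the $H^1$ part unavoidably contains terms such as $\nabla\bigl((\tfrac1\rho-\tfrac1{\bar\rho})\Delta v\bigr)$, which force $\|\nabla^3 v\|_2$ or $\|\nabla^2 a\|_2$ into the estimate. These are not tracked by $M(t)$; if you bound them by the uniform $H^3$ smallness $\delta$, the remaining factor decays only like $(1+s)^{-\sigma(p,2;0)}$ or slower, and the convolution $\int_0^t(1+t-s)^{-\sigma(p,2;1)}(1+s)^{-\sigma(p,2;0)}\,ds$ returns only $(1+t)^{-\sigma(p,2;0)}$, a loss of $\tfrac12$. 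Your proposed remedy---putting $\mathcal N(U)$ in divergence form to steal a $(1+t-s)^{-1/2}$ from the kernel---does not apply to the viscosity-correction term $(\tfrac1\rho-\tfrac1{\bar\rho})\Delta v$, which is not a total divergence. The subsequent ``weighted energy'' step you sketch is then circular: you need decay of $\|U\|_{W^{1,\infty}}$, which by Sobolev requires decay of $\|\nabla^2 U\|_2$, the very quantity you are trying to upgrade.

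The paper avoids this loss by a different mechanism (due to Duan--Ukai--Yang--Zhao). It first couples the Duhamel inequality for $\|\nabla W\|_2$ (Lemma~\ref{lem3.5}) with an energy inequality for the full functional $M(t)\sim\|\nabla W\|_{H^2}^2$ whose right-hand side is \emph{only} $C\delta\|\nabla W\|_2^2$ (Lemma~\ref{lem3.6}). Crucially it then raises both inequalities to the power $n$, multiplies by $(1+t)^l$, and iterates inductively in $l=0,1,\dots,N\sim 2n\sigma(p,2;1)$; the time-weight loss at each step is exactly compensated by the extra factor of $M(s)^{n-1}$, and after the iteration one takes $n\to\infty$ to recover the sharp exponent. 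This higher-power trick is what makes the argument go through uniformly for all $p\in[1,\tfrac65)$, and it is absent from your proposal.
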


\begin{rem}
\eqref{1.9} shows that the perturbation of initial data around the constant state $(\bar{\rho},0,0,\bar{k}, 0)$  is bounded in $L^p$-norm, for some $p\in[1,\frac{6}{5})$, which need not be small.
\end{rem}
\begin{rem}
The linearized equations of \eqref{MHD} around the constant state  $(\bar{\rho},0,0,\bar{k}, 0)$  take the following form:
\begin{align}\label{ke}
\begin{cases}
a_t+ \gamma\mbox{div}v=0,\\
v_t+\gamma\nabla a-\lambda\Delta v-\lambda\nabla\mbox{div}v=0,\\
 h_t-\lambda\Delta h=0,\\
m_t-\lambda\Delta m=0,\\
 \varepsilon_t-\lambda\Delta\varepsilon
=0,
\end{cases}
\end{align}
where $\gamma$, $\lambda$ are positive constants which will be given precisely in Section 2. Compared to the decay estimates of the solutions to the above linearized equations by using Fourier analysis \cite{kobayashi} stated in Lemma 2.1 in the next section, Theorem \ref{thm2} gives the optimal decay rates for the solution in $L^q$-norm, for any $2\leq q\leq 6$, and its first order estimates in $L^2$-norm. Note that the convergence rates of  the derivatives of higher order in $L^2$-norm and the solution in $L^{\infty}$-norm are not the same as those for linearized equations.
\end{rem}
\begin{rem}
We mainly use the method in \cite{tong} to prove Theorem \ref{thm2}. But our problem is much more difficult because of the strong coupling between velocity, total enthalpy, turbulent kinetic energy and rate of viscous dissipation. Moreover, our result is better than that in \cite{tong}. Here we assume $L^p$-norm of initial data $\rho_0-\bar{\rho}, u_{0}, h_0, k_0-\bar{k}, \varepsilon_0$ is bounded, for some $p\in[1,\frac65)$, instead of $\|(\rho_0-\bar{\rho}, u_{0}, h_0, k_0-\bar{k}, \varepsilon_0)\|_{L^1}<\infty$.
\end{rem}

 \textbf{Notation:} Throughout the paper, $C$ stands for a
general constant, and may change from line to line.
The norm $\|(A,B)\|_{X}$ is equivalent to $\|A\|_{X}+\|B\|_{X}$ and $\|A\|_{X\cap Y}=\|A\|_{X}+\|A\|_{Y}$.
The norms in the Sobolev Spaces $H^m(\mathbb{R}^3)$ and $W^{m,q}(\mathbb{R}^3)$ are denoted respectively by $\|\cdot\|_{H^m}$ and $\|\cdot\|_{W^{m,q}}$ for $m\geq0$, $q\geq 1$.  In particular, for $m=0$, we will simply use $\|\cdot\|_p$. Moreover,
 $\langle\cdot,\cdot\rangle$ denotes the inner-product in $L^2(\mathbb{R}^3)$.
 Finally, $$\nabla=(\partial_1,\partial_2, \partial_3),\  \partial_i=\partial_{x_i},\  i=1,\ 2,\ 3,$$
 and for any integer $l\geq0$, $\nabla^lf$ denotes all derivatives up to $l$-order of the function $f$. And for multi-indices $\alpha$, $\beta$ and $\xi$ $$\alpha=(\alpha_1,\alpha_2,\alpha_3),\ \beta=(\beta_1,\beta_2,\beta_3),\ \xi=(\xi_1,\xi_2,\xi_3),$$
we use $$\partial_x^{\alpha}=\partial_{x_1}^{\alpha_1}\partial_{x_2}^{\alpha_2}\partial_{x_3}^{\alpha_3},\ |\alpha|=\sum_{i=1}^{3}\alpha_i,$$
and $C_{\alpha}^{\beta}=\frac{\alpha!}{\beta!(\alpha-\beta)!}$ when $\beta\leq \alpha$.

\section{Preliminaries}
We will reformulate the problem \eqref{MHD} as follows. Set
$$\gamma=\sqrt{p'(\bar{\rho})+\bar{k}},\ \   \lambda=\frac{1}{\bar{\rho}}.$$ Introducing new variables by $$a=\rho-\bar{\rho},\  v=\frac{1}{\gamma\lambda},\  h=h, \ m=k-\bar{k},\  \varepsilon=\varepsilon,$$ the initial value problem \eqref{MHD} is reformulated as
\begin{align}\label{ke-new}
\begin{cases}
a_t+ \gamma\mbox{div}v=F_1,\\
v_t+\gamma\nabla a-\lambda\Delta v-\lambda\nabla\mbox{div}v=F_2,\\
 h_t-\lambda\Delta h=F_3,\\
m_t-\lambda\Delta m=F_4,\\
 \varepsilon_t-\lambda\Delta\varepsilon
=F_5,\\
(a, v, h, m,\varepsilon)(x,0)=(a_0, v_0, h_0, m_0, \varepsilon_0)\rightarrow0 \ \mbox{as} \ |x|\rightarrow\infty,
\end{cases}
\end{align}
where
\begin{align}
\begin{split}
&F_1=-\gamma\lambda\mbox{div}(au),\\
&F_2=(\frac{1}{\rho}-\frac{1}{\bar{\rho}})(\Delta v+\nabla\mbox{div}v)-\frac{1}{\gamma\lambda}\big(\frac{p'(a+\bar{\rho})}{a+\bar{\rho}}
-\frac{p'(\bar{\rho})}{\bar{\rho}}+\frac{2(m+\bar{k})}{3(a+\bar{\rho})}-\frac{2\bar{k}}{3\bar{\rho}}\big)\nabla a\\
&-\frac{2}{3\gamma\lambda}\nabla m,\\
&F_3=(\frac{1}{\rho}-\frac{1}{\bar{\rho}})\Delta h-\gamma \lambda p'(a+\bar{\rho})\mbox{div}v+\frac{1}{a+\bar{\rho}}S^1_k -\gamma\lambda v\cdot\nabla h,\\
&F_4=(\frac{1}{\rho}-\frac{1}{\bar{\rho}})\Delta m+\frac{1}{a+\bar{\rho}}G^1-\varepsilon -\gamma\lambda v\cdot\nabla m,\\
&F_5=(\frac{1}{\rho}-\frac{1}{\bar{\rho}})\Delta \varepsilon+\frac{C_1G^1\varepsilon}{(a+\bar{\rho})(m+\bar{k})}-\frac{C_2\varepsilon^2}{m+\bar{k}}S_k -\gamma\lambda v\cdot\nabla \varepsilon,
\end{split}
\end{align}
 with $S^1_k$ and $G^1$ the corresponding $S_k$ and $G$ in the variables of $(a, v, h, m, \varepsilon)$.

Let $U=(a, v)$, $U_0=(a_0, v_0)$, $F=(F_1,F_2)$,
\begin{displaymath}
\mathrm{A}=
\left( \begin{array}{ccc}
0&  \gamma \mbox{div}\\
\gamma\nabla& \lambda\Delta+\lambda\nabla\mbox{div}
\end{array}
\right)
\end{displaymath}
and
$E(t)$ be the semigroup generated by the linear operator $A$, then we can rewrite the solution for the first two equations of the nonlinear problem \eqref{MHD} as
\begin{align}\label{integral1}
U(t)=E(t)U_0+\int_0^tE(t-s)F\mbox{d}s.
\end{align}
The semigroup $E(t)$ has the following properties on the decay in time, which can be found in \cite{kobayashi1,kobayashi} and will be applied to the integral formula \eqref{integral1}.
\begin{lem}\label{lem1}
Let $l\geq 0$ be an integer and $1\leq p\leq 2\leq q<\infty$. Then for any $t\geq 0$, it holds that
\begin{align}\label{decaylemma1}
\|\nabla^lE(t)U_0\|_q\leq C(1+t)^{-\sigma(p,q;l)}\|U_0\|_{L^p\cap H^l},
\end{align}
with $\sigma(p,q;l)$ defined by \eqref{sigmadefinition}.
\end{lem}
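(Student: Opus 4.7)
The plan is to Fourier-analyze the linear operator $A$ and treat the decoupled scalar heat equations for $h,m,\varepsilon$ separately from the genuinely coupled $(a,v)$-block. Since $h_t=\lambda\Delta h$, $m_t=\lambda\Delta m$, $\varepsilon_t=\lambda\Delta\varepsilon$ are just heat equations, the classical estimate $\|\nabla^l e^{t\lambda\Delta}f\|_q\le C(1+t)^{-\sigma(p,q;l)}\|f\|_{L^p\cap H^l}$ follows from Young's inequality applied to the heat kernel for $t\ge 1$ and from Sobolev embedding $H^l\hookrightarrow L^q$ combined with boundedness of the heat semigroup for $t\le 1$. The nontrivial work is therefore confined to the $(a,v)$ subsystem.

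For that block, applying the Fourier transform gives $\widehat{U}(t,\xi)=e^{t\widehat{A}(\xi)}\widehat{U}_0(\xi)$ where
\begin{equation*}
\widehat{A}(\xi)=\begin{pmatrix} 0 & i\gamma\,\xi^\top \\ i\gamma\,\xi & -\lambda|\xi|^2 I_3-\lambda\,\xi\xi^\top \end{pmatrix}.
\end{equation*}
I would diagonalize by splitting $v$ into its curl-free part $v_\parallel=(\xi\cdot\hat v)\xi/|\xi|^2$ and its divergence-free part $v_\perp$. The transverse component decouples and satisfies a scalar heat-type equation $\partial_t v_\perp=\lambda\Delta v_\perp$, again giving standard heat bounds. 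The longitudinal $(a,|\xi|v_\parallel)$-block becomes a $2\times 2$ system whose characteristic roots are
\begin{equation*}
\mu_\pm(\xi)=-\lambda|\xi|^2\pm\sqrt{\lambda^2|\xi|^4-\gamma^2|\xi|^2}.
\end{equation*}
A Taylor expansion shows $\mathrm{Re}\,\mu_\pm(\xi)\sim -\lambda|\xi|^2$ for small $|\xi|$, while for large $|\xi|$ one root stays $\sim -2\lambda|\xi|^2$ and the other $\sim -\gamma^2/(2\lambda)$; in both regimes $\mathrm{Re}\,\mu_\pm(\xi)\lesssim -c\,|\xi|^2/(1+|\xi|^2)$. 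Hence there is a constant $c_0>0$ such that $|e^{t\widehat A(\xi)}|\le C e^{-c_0|\xi|^2 t}$ for $|\xi|\le r_0$ and $|e^{t\widehat A(\xi)}|\le C e^{-c_0 t}$ for $|\xi|\ge r_0$.

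Fix a smooth cutoff $\chi(\xi)$ supported in $|\xi|\le 2r_0$ and equal to $1$ on $|\xi|\le r_0$, and write $E(t)U_0=E_L(t)U_0+E_H(t)U_0$ by inserting $\chi$ and $1-\chi$ in the Fourier multiplier. For the low-frequency part I would realize $E_L(t)$ as convolution against a Green's kernel $G_t^L$. The pointwise bound on $|\widehat{G^L_t}|$ yields, for $1\le r\le 2$ and any integer $l\ge 0$,
\begin{equation*}
\|\nabla^l G_t^L\|_{r'}\le C\bigl\| |\xi|^l e^{-c_0|\xi|^2 t}\chi(\xi)\bigr\|_{L^r}\le C(1+t)^{-\frac{3}{2}(\frac{1}{r}-\frac{1}{r'})-\frac{l}{2}},
\end{equation*}
where I use Hausdorff--Young and a direct change of variables $\xi=\sqrt{t}\,\eta$. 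Applying Young's convolution inequality with $1+\frac{1}{q}=\frac{1}{p}+\frac{1}{r'}$ then gives $\|\nabla^l E_L(t)U_0\|_q\le C(1+t)^{-\sigma(p,q;l)}\|U_0\|_{L^p}$. For the high-frequency part, Plancherel plus the exponential bound immediately gives $\|\nabla^l E_H(t)U_0\|_{L^2}\le Ce^{-c_0 t}\|\nabla^l U_0\|_{L^2}$, from which $\|\nabla^l E_H(t)U_0\|_{L^q}$ for $2\le q<\infty$ follows via Gagliardo--Nirenberg interpolation $\|f\|_q\le C\|f\|_2^{1-\theta}\|\nabla^{[\frac{3}{2}]+1} f\|_2^{\theta}$ combined with the $H^l$ regularity of $U_0$. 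Adding both contributions and noting that the exponential dominates any polynomial factor establishes the claim.

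The step I expect to be the most delicate is the eigenvalue analysis together with the matrix-exponential bound on $e^{t\widehat A(\xi)}$, because the acoustic modes $\mu_\pm$ become complex (oscillatory) in the low-frequency regime and coalesce near the transition $|\xi|=\gamma/\lambda$, so the usual diagonalization degenerates there and one must use a Jordan-block argument or a direct spectral projection to keep the constants in the exponential bound uniform in $\xi$. Once that uniform decay of the semigroup symbol is secured, the remainder of the argument is routine harmonic analysis.
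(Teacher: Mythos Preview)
The paper does not prove this lemma; it simply cites Kobayashi \cite{kobayashi1} and Kobayashi--Shibata \cite{kobayashi}, so there is no in-paper argument to compare against. Your Fourier-analytic outline --- splitting $v$ into solenoidal and potential parts, computing the acoustic eigenvalues $\mu_\pm(\xi)=-\lambda|\xi|^2\pm\sqrt{\lambda^2|\xi|^4-\gamma^2|\xi|^2}$, and separating low and high frequencies --- is exactly the standard route taken in those references, and your spectral analysis (including the identification of the delicate coalescence near $|\xi|=\gamma/\lambda$) is correct.

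Two points deserve attention. First, in this paper $E(t)$ acts only on $U=(a,v)$; the scalar heat semigroups for $h,m,\varepsilon$ are handled separately by $S(t)$ in Lemma~\ref{lem2}, so your opening paragraph about those variables is extraneous to the present statement. Second, and more substantively, your low-frequency kernel bound via Hausdorff--Young requires $1\le r\le 2$, i.e.\ $r'\ge 2$; but the Young relation $1+\tfrac{1}{q}=\tfrac{1}{p}+\tfrac{1}{r'}$ forces $\tfrac{1}{r}=\tfrac{1}{p}-\tfrac{1}{q}$, which lies in $[\tfrac12,1]$ only when $\tfrac{1}{p}-\tfrac{1}{q}\ge\tfrac12$. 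For, say, $p=q=2$ this fails, so the argument as written does not cover the full range $1\le p\le 2\le q<\infty$ claimed in the lemma. The usual fix is to apply Hausdorff--Young to $\widehat{U}_0$ instead (valid since $p\le 2$) and then use H\"older in frequency space:
\[
\|\nabla^l E_L(t)U_0\|_q\le C\bigl\||\xi|^l e^{t\widehat A(\xi)}\chi(\xi)\widehat{U}_0\bigr\|_{q'}\le C\bigl\||\xi|^l e^{-c_0|\xi|^2 t}\bigr\|_{L^s}\|\widehat{U}_0\|_{p'},\qquad \tfrac{1}{s}=\tfrac{1}{p}-\tfrac{1}{q},
\]
which yields the rate $\sigma(p,q;l)$ for all admissible $(p,q)$. (Incidentally, the displayed exponent $\tfrac{3}{2}(\tfrac{1}{r}-\tfrac{1}{r'})+\tfrac{l}{2}$ in your kernel estimate should read $\tfrac{3}{2r}+\tfrac{l}{2}$.)
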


To treat the last three equations, we introduce the  semigroup $S(t)$ generated by $\lambda \Delta$, then $(\ref{ke-new})_3$-$(\ref{ke-new})_5$ become
\begin{align}\label{integral2}
\begin{split}
&h(t)=S(t)h_0+\int_0^tS(t-s)F_3\mbox{d}s,\\
&m(t)=S(t)m_0+\int_0^tS(t-s)F_4\mbox{d}s,\\
&\varepsilon(t)=S(t)\varepsilon_0+\int_0^tS(t-s)F_5\mbox{d}s.
\end{split}
\end{align}
We state the large-time behavior of solutions to the last three equations of the system \eqref{ke-new} as the following lemma which can be obtained by direct calculation or can refer to \cite{zuazua}.
\begin{lem}\label{lem2}
For the solution $(h, m, \varepsilon)$ of the last three equations of the system \eqref{ke-new} with Cauchy data $h(x,0)=h_0$, $m(x,0)=m_0$, $\varepsilon(x,0)=\varepsilon_0$, there exists a constant $C$ such that
\begin{align}\label{decaylemma2}
\begin{split}
&\|(\nabla^lh,\nabla^lm, \nabla^l\varepsilon) \|_q\leq C(1+t)^{-\sigma(p,q;l)}\|(h_0,m_0,\varepsilon_0)\|_{p}\\
&+C\int_0^t(1+t-s)^{-\sigma(p,q;l)}\|(F_3,F_4,F_5)\|_p,\ l=0,\ 1,
\end{split}
\end{align}
for any $t\geq0$, $1\leq p,q\leq+\infty$, as well as $\sigma$ is defined by \eqref{sigmadefinition}.
\end{lem}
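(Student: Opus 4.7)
The plan is to treat each of the three scalar heat equations $(\ref{ke-new})_3$--$(\ref{ke-new})_5$ independently via the Duhamel formulation (\ref{integral2}) and apply standard $L^p$--$L^q$ dispersive bounds for the heat semigroup $S(t)=e^{\lambda t\Delta}$ on $\mathbb{R}^3$. Since the three equations are decoupled and structurally identical, I will describe the argument for $h$; the estimates for $m$ and $\varepsilon$ are verbatim.

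The core ingredient is the heat-kernel bound. Writing $S(t)f=K_t\ast f$ with $K_t(x)=(4\pi\lambda t)^{-3/2}e^{-|x|^2/(4\lambda t)}$ and using the scaling $\nabla^l K_t(x)=t^{-(3+l)/2}(\nabla^l K_1)(x/\sqrt{t})$, a direct $L^r$ computation gives $\|\nabla^l K_t\|_r=C\,t^{-l/2-(3/2)(1-1/r)}$. Young's inequality with $1+1/q=1/r+1/p$ then yields the fundamental dispersive estimate
\begin{equation*}
\|\nabla^l S(t)f\|_q\leq C\,t^{-\sigma(p,q;l)}\|f\|_p,\qquad t>0,\ 1\leq p\leq q\leq\infty.
\end{equation*}
For $t\geq 1$ this immediately produces the factor $(1+t)^{-\sigma(p,q;l)}$ up to a constant. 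For $t\in[0,1]$ the desired factor is $O(1)$, so one only needs the trivial semigroup bound $\|\nabla^l S(t)h_0\|_q\leq C\|h_0\|_p$, which follows from the ambient Sobolev regularity of $h_0$ carried by Proposition \ref{thm1} (or, alternatively, from the Fourier-side argument of \cite{zuazua}). Together these cover the homogeneous piece of (\ref{decaylemma2}).

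Applying the same dispersive bound inside the Duhamel integral gives
\begin{equation*}
\Bigl\|\nabla^l\!\int_0^t S(t-s)F_3(s)\,ds\Bigr\|_q\leq C\int_0^t (t-s)^{-\sigma(p,q;l)}\|F_3(s)\|_p\,ds,
\end{equation*}
and splitting $s\in[0,t-1]$ against $s\in[t-1,t]$ converts $(t-s)^{-\sigma}$ into $(1+t-s)^{-\sigma}$ at the cost of a bounded constant. The singularity at $s=t$ is integrable because the exponents used later in Theorem \ref{thm2} satisfy $\sigma(p,q;l)<1$ for $p\in[1,6/5)$, $q\in[2,6]$, $l\in\{0,1\}$. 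Summing the two contributions and repeating for $m$ and $\varepsilon$ delivers (\ref{decaylemma2}).

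The only genuinely delicate point is the $t\leq 1$ regime, where the raw dispersive estimate is singular and one must appeal to the initial Sobolev regularity (or to Fourier-based off-diagonal bounds as in \cite{zuazua}) to replace the naive $t^{-\sigma}$ by a bounded factor; beyond that the proof is a routine application of Young's inequality and the Duhamel principle, which is why the statement is packaged as a preliminary lemma rather than a theorem.
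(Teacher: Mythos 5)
The paper never actually proves this lemma (it is dispatched with ``direct calculation or cf.\ \cite{zuazua}''), so the only question is whether your heat-semigroup argument is airtight; it is not, and the failure is concentrated in the short-time regime $t-s\leq 1$. For the Duhamel term you invoke $\|\nabla^{l}S(t-s)F_{3}(s)\|_{q}\leq C(t-s)^{-\sigma(p,q;l)}\|F_{3}(s)\|_{p}$ and then claim that on $s\in[t-1,t]$ the factor $(t-s)^{-\sigma}$ can be ``converted'' into $(1+t-s)^{-\sigma}$ at the cost of a bounded constant. No such constant exists: on that interval $(1+t-s)^{-\sigma}\geq 2^{-\sigma}$ is bounded below while $(t-s)^{-\sigma}$ is unbounded, and taking $\|F_{3}(s)\|_{p}$ concentrated near $s=t$ shows the two integrals are not comparable. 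Your fallback --- that the singularity is harmless because $\sigma(p,q;l)<1$ --- is arithmetically wrong in precisely the case in which the lemma is later used: Lemma \ref{lem3.5} applies \eqref{decaylemma2} with $q=2$, $l=1$, $p\in[1,\tfrac{6}{5})$, for which $\sigma(p,2;1)=\tfrac{3}{2p}-\tfrac14>1$ (it equals $\tfrac54$ at $p=1$), so $(t-s)^{-\sigma}$ is not even locally integrable at $s=t$.

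The same defect affects the homogeneous piece: $\|\nabla^{l}S(t)h_{0}\|_{q}\leq C\|h_{0}\|_{p}$ for $t\leq 1$ is false whenever $q>p$ or $l\geq1$ (let $t\to0$ with $h_{0}\in L^{p}\setminus W^{l,q}$), and your appeal to the ``ambient Sobolev regularity'' of $h_{0}$ silently replaces $\|h_{0}\|_{p}$ by a Sobolev norm, i.e.\ changes the statement being proved. The honest conclusion is that \eqref{decaylemma2} cannot be derived in the literal form printed --- with only $L^{p}$ norms on the right --- from heat-kernel smoothing, because it is not true in that form; the correct version, parallel to Lemma \ref{lem1}, carries $\|(h_{0},m_{0},\varepsilon_{0})\|_{L^{p}\cap W^{l,q}}$ and $\|(F_{3},F_{4},F_{5})(s)\|_{L^{p}\cap W^{l,q}}$, obtained by splitting at $t-s=1$ and using the non-smoothing bound $\|\nabla^{l}S(\tau)F\|_{q}\leq\|\nabla^{l}F\|_{q}$ for $\tau\leq1$ together with the dispersive bound for $\tau\geq1$. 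Since the forcing is anyway controlled by $C\delta\|\nabla W\|_{H^{2}}$ in \eqref{f3f4f5}, the paper's downstream use survives this correction, but your proof needs to be rewritten around the two-regime splitting rather than around a uniform smoothing estimate.
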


For later use we list some Sobolev inequalities as follows, cf. \cite{adams, deckelnick}.
\begin{lem}\label{2.2}
Let $\Omega\subset\mathbb{R}^3$ be the whole space $\mathbb{R}^3$, or half space $\mathbb{R}_+$ or the exterior domain of a bounded region with smooth boundary. Then

(i) $\|f\|_{L^6(\Omega)}\leq C\|\nabla f\|_{L^2(\Omega)}$, for $f\in H^1(\Omega)$.

(ii) $\|f\|_{L^p(\Omega)}\leq C\|f\|_{H^1(\Omega)}$, for $2\leq p\leq 6$.

(iii) $\|f\|_{C^0(\bar{\Omega})}\leq C\|f\|_{W^{1,p}(\Omega)}\leq C\|\nabla f\|_{H^1(\Omega)}$, for $f\in H^2(\Omega)$.

(iv) $|\int_{\Omega}f\cdot g\cdot h\mbox{d}x|\leq \varepsilon\|\nabla f\|_{L^2}^2+\frac{C}{\varepsilon}\|g\|_{H^1}^2\|h\|_{L^2}^2$, for $\varepsilon>0$, $f,\  g \in H^1(\Omega)$,  $h\in L^2(\Omega)$.

(v) $|\int_{\Omega}f\cdot g\cdot h\mbox{d}x|\leq \varepsilon\|g\|_{L^2}^2+\frac{C}{\varepsilon}\|\nabla f\|_{H^1}^2\|h\|_{L^2}^2$, for $\varepsilon>0$, $f\in H^2(\Omega)$,  $g,\ h\in L^2(\Omega)$.
\end{lem}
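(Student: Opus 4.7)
My plan is to reduce each claim to the whole-space case via a bounded linear extension operator, then prove (i)--(iii) from classical Sobolev embeddings, and obtain (iv)--(v) as short consequences of (i)--(iii) combined with H\"older's and Young's inequalities. Since $\Omega$ is $\R^3$, an upper half-space, or the exterior of a bounded domain with smooth boundary, Stein's extension theorem provides a bounded linear operator $E:W^{m,p}(\Omega)\to W^{m,p}(\R^3)$ for all relevant $(m,p)$; each inequality in (i)--(iii) therefore reduces to its whole-space analogue applied to $Ef$, and the resulting $L^p$-type norms on $\R^3$ are controlled by the corresponding norms on $\Omega$.

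For (i), I would cite the Gagliardo--Nirenberg--Sobolev inequality on $\R^3$, $\|Ef\|_{L^6(\R^3)}\le C\|\nabla Ef\|_{L^2(\R^3)}$, and restrict back to $\Omega$. For (ii), writing $\tfrac{1}{p}=\tfrac{1-\theta}{2}+\tfrac{\theta}{6}$ with $\theta=\tfrac{3(p-2)}{2p}\in[0,1]$ for $p\in[2,6]$, standard log-convexity of $L^r$-norms yields
$$\|f\|_p\le \|f\|_2^{1-\theta}\|f\|_6^\theta \le C\|f\|_2^{1-\theta}\|\nabla f\|_2^\theta \le C\|f\|_{H^1}.$$
For (iii), Morrey's embedding $W^{1,q}(\Omega)\hookrightarrow C^0(\bar\Omega)$ (valid for any $q>3$) with the choice $q=6$ gives $\|f\|_{C^0(\bar\Omega)}\le C\|f\|_{W^{1,6}(\Omega)}$; applying (i) to $f$ and componentwise to $\nabla f$ yields $\|f\|_{L^6}\le C\|\nabla f\|_{L^2}$ and $\|\nabla f\|_{L^6}\le C\|\nabla^2 f\|_{L^2}$, hence $\|f\|_{W^{1,6}}\le C\|\nabla f\|_{H^1}$, which finishes (iii).

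For (iv), H\"older's inequality with exponents $(6,3,2)$ combined with (i) for $\|f\|_6$ and (ii) with $p=3$ for $\|g\|_3$ gives
$$\Bigl|\int_\Omega f\,g\,h\,dx\Bigr|\le \|f\|_6\,\|g\|_3\,\|h\|_2 \le C\|\nabla f\|_{L^2}\,\|g\|_{H^1}\,\|h\|_{L^2},$$
and then Young's inequality $ab\le \varepsilon a^2+\tfrac{C}{\varepsilon}b^2$ applied with $a=\|\nabla f\|_{L^2}$ and $b=\|g\|_{H^1}\|h\|_{L^2}$ produces the stated bound. For (v), H\"older with $(\infty,2,2)$ followed by (iii) gives
$$\Bigl|\int_\Omega f\,g\,h\,dx\Bigr|\le \|f\|_\infty\,\|g\|_{L^2}\,\|h\|_{L^2} \le C\|\nabla f\|_{H^1}\,\|g\|_{L^2}\,\|h\|_{L^2},$$
and the same Young argument with $a=\|g\|_{L^2}$, $b=\|\nabla f\|_{H^1}\|h\|_{L^2}$ yields (v).

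Since every ingredient is classical, there is no real obstacle. The only point demanding a little care is to ensure that a single extension theorem covers all three admissible geometries for $\Omega$; this is handled by Stein's universal extension on Lipschitz (in particular $C^1$) domains, which treats $\R^3$, $\R^3_+$, and exteriors of bounded smooth sets uniformly, and makes the constants $C$ depend only on $\Omega$.
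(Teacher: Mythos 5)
The paper does not actually prove this lemma; it merely lists the inequalities ``for later use'' with a pointer to Adams' book and Deckelnick's paper, so there is no in-paper argument to compare against. Your overall plan is the standard one, and parts (ii), (iv) and (v) are correct as written: the interpolation $\|f\|_p\le\|f\|_2^{1-\theta}\|f\|_6^{\theta}$, the H\"older splittings $(6,3,2)$ and $(\infty,2,2)$, and Young's inequality do exactly what you claim.

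There is, however, one genuine gap: your reduction of (i) to the whole space via Stein's extension operator does not deliver the \emph{homogeneous} inequality $\|f\|_{L^6(\Omega)}\le C\|\nabla f\|_{L^2(\Omega)}$ when $\Omega$ is the half-space or an exterior domain. Stein's operator is bounded as $E:H^1(\Omega)\to H^1(\mathbb{R}^3)$, so what you actually control is $\|\nabla Ef\|_{L^2(\mathbb{R}^3)}\le C\|f\|_{H^1(\Omega)}$, and the chain $\|f\|_{L^6(\Omega)}\le\|Ef\|_{L^6(\mathbb{R}^3)}\le C\|\nabla Ef\|_{L^2(\mathbb{R}^3)}$ then only yields the inhomogeneous bound $\|f\|_{L^6(\Omega)}\le C\|f\|_{H^1(\Omega)}$, i.e.\ statement (ii), not (i). For the half-space this is easily repaired by using the even reflection, which satisfies $\|\nabla Ef\|_{L^2(\mathbb{R}^3)}\le 2\|\nabla f\|_{L^2(\mathbb{R}^3_+)}$ exactly; for an exterior domain the homogeneous Sobolev inequality is still true but requires a separate argument (a cutoff near the boundary plus a Sobolev--Poincar\'e inequality on the bounded overlap region, which is essentially what Deckelnick's reference supplies). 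Since your proof of (iii) feeds (i) into $\|f\|_{L^6}$ and $\|\nabla f\|_{L^6}$, the same caveat propagates there. For $\Omega=\mathbb{R}^3$ --- the only case the paper ever uses --- the Gagliardo--Nirenberg--Sobolev inequality applies directly without any extension, and your argument is complete.
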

Finally, the following elementary inequality \cite{tong} will also be used.
\begin{lem}\label{lem2.5}
If $r_1>1$ and $r_2\in[0,r_1]$, then it holds that
\begin{align}
\int_0^t(1+t-s)^{-r_1}(1+s)^{-r_2}\mbox{d}s\leq C_1(r_1,r_2)(1+t)^{-r_2},
\end{align}
where $C_1(r_1,r_2)$ is defined by
\begin{align}\label{2.10}
C_1(r_1,r_2)=\frac{2^{r_2+1}}{r_1-1}.
\end{align}
\end{lem}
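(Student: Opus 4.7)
The plan is the standard dyadic split of the integration interval at $s = t/2$, writing $\int_0^t = I_1 + I_2$ with $I_1 = \int_0^{t/2}$ and $I_2 = \int_{t/2}^t$. The point is that on $[0,t/2]$ the factor $(1+t-s)$ is comparable to $1+t$, while on $[t/2,t]$ the factor $(1+s)$ is comparable to $1+t$; in both cases the comparison constant is $2$, so a factor $2^{r_2}(1+t)^{-r_2}$ can be extracted and the residual $s$-integral will be absolutely convergent thanks to the assumption $r_1>1$.

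For $I_2$ I would pull out $(1+s)^{-r_2}\leq 2^{r_2}(1+t)^{-r_2}$ (using $1+s\geq(1+t)/2$) and substitute $u=t-s$ in what remains, giving $\int_0^{t/2}(1+u)^{-r_1}\,\mathrm{d}u\leq\int_0^{\infty}(1+u)^{-r_1}\,\mathrm{d}u=\frac{1}{r_1-1}$; hence $I_2\leq\frac{2^{r_2}}{r_1-1}(1+t)^{-r_2}$.

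For $I_1$ the analogue is more delicate because $(1+s)^{-r_2}$ is not small and by itself integrates to something that grows in $t$ when $r_2\leq 1$. The trick is to bound $(1+t-s)^{-r_1}\leq(1+t/2)^{-r_1}$ and then split the exponent as $r_2+(r_1-r_2)$: the $(1+t/2)^{-r_2}$ piece gives $2^{r_2}(1+t)^{-r_2}$, and the leftover $(1+t/2)^{-(r_1-r_2)}$, since $r_1-r_2\geq 0$ and $1+s\leq 1+t/2$ on this interval, is dominated pointwise by $(1+s)^{-(r_1-r_2)}$; multiplying against the $(1+s)^{-r_2}$ already present in the integrand produces $(1+s)^{-r_1}$, whose integral over $[0,t/2]$ is again bounded by $\frac{1}{r_1-1}$. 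Thus $I_1\leq\frac{2^{r_2}}{r_1-1}(1+t)^{-r_2}$ as well, and summing gives the claimed $\frac{2^{r_2+1}}{r_1-1}(1+t)^{-r_2}$.

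The main obstacle is sharpening the constant in $I_1$: a naive argument that directly bounds $(1+t-s)^{-r_1}\leq 2^{r_1}(1+t)^{-r_1}$ on $[0,t/2]$ does yield the right decay rate (because $r_1>1$ absorbs the growth of $\int_0^{t/2}(1+s)^{-r_2}\,\mathrm{d}s$ after trading off factors of $(1+t)$ using $r_1-r_2\geq 0$), but it loses a factor $2^{r_1-r_2}$ compared with the stated $2^{r_2}$. The exponent-splitting maneuver above is what restores the sharp constant, and it uses both hypotheses $r_1>1$ and $r_2\in[0,r_1]$ in an essential way.
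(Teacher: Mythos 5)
The paper gives no proof of this lemma, citing it from \cite{tong}; your argument is the standard dyadic splitting at $s=t/2$ and is correct, with the exponent-splitting step $(1+t/2)^{-r_1}=(1+t/2)^{-r_2}(1+t/2)^{-(r_1-r_2)}\leq 2^{r_2}(1+t)^{-r_2}(1+s)^{-(r_1-r_2)}$ on $[0,t/2]$ correctly using both $r_1>1$ and $0\leq r_2\leq r_1$ to recover exactly the stated constant $\frac{2^{r_2+1}}{r_1-1}$. Nothing to add.
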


\section{Basic estimates}
In this section we shall establish two basic inequalities for the proof of the optimal convergence rates in section 4. One inequality is the decay rate of the first order derivatives, while the other is the energy estimate.
\begin{lem}\label{lem3.5}
Let $W=(U, h, m, \varepsilon)$ be the solution to the problem \eqref{ke-new}, then under the assumptions of Theorem \ref{thm2}, we have
\begin{align}\label{inequality3.5}
\begin{split}
&\|\nabla W \|_2\leq CE_0(1+t)^{-\sigma(p,2;1)}
+C\int_0^t(1+t-s)^{-\sigma(p,2;1)}\|\nabla W\|_{H^2}\mbox{d}s,
\end{split}
\end{align}
with $E_0=\|W(0)\|_{L^p\cap H^1}=\|(U_0, h_0, m_0, \varepsilon_0)\|_{L^p\cap H^1}=\|(a_0, v_0, h_0, m_0, \varepsilon_0)\|_{L^p\cap H^1}$ and $1\leq p<\frac65$.
\end{lem}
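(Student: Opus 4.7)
The plan is to combine the Duhamel formulas \eqref{integral1} and \eqref{integral2} with the semigroup decay estimates in Lemmas \ref{lem1} and \ref{lem2}, then bound the nonlinear forcing terms $F_i$ in the norm $L^p\cap H^1$ by means of H\"older and Sobolev inequalities, using the a priori smallness coming from Proposition \ref{thm1}.

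First I would apply $\nabla$ to \eqref{integral1} and take the $L^2$ norm. Lemma \ref{lem1} with $l=1$, $q=2$ gives
\[
\|\nabla U(t)\|_2 \le C(1+t)^{-\sigma(p,2;1)}\|U_0\|_{L^p\cap H^1} + C\int_0^t(1+t-s)^{-\sigma(p,2;1)}\|F\|_{L^p\cap H^1}\,ds,
\]
where $F=(F_1,F_2)$. Doing the same on the three parabolic equations in \eqref{integral2} via Lemma \ref{lem2} produces the analogous bounds for $\nabla h,\nabla m,\nabla\varepsilon$ in terms of $\|h_0\|_p$, $\|m_0\|_p$, $\|\varepsilon_0\|_p$ (together with their $H^1$ norms, if one wants to be uniform) and the convolution integrals of $\|F_3\|_p,\|F_4\|_p,\|F_5\|_p$ (and the $H^1$ versions, if needed). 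Summing the four bounds yields the first term $CE_0(1+t)^{-\sigma(p,2;1)}$ automatically; it remains to show
\[
\|F\|_{L^p\cap H^1}+\|(F_3,F_4,F_5)\|_{L^p\cap H^1}\le C\|\nabla W\|_{H^2},
\]
from which the stated estimate follows by monotonicity of the weight $(1+t-s)^{-\sigma(p,2;1)}$.

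To estimate each $F_i$ in $L^p$, I would expand the rational nonlinearities such as $\tfrac{1}{\rho}-\tfrac{1}{\bar\rho}$, $\tfrac{p'(a+\bar\rho)}{a+\bar\rho}-\tfrac{p'(\bar\rho)}{\bar\rho}$, and $\tfrac{C_1 G^1}{(a+\bar\rho)(m+\bar k)}$ around the equilibrium; since $\|a\|_{L^\infty}+\|m\|_{L^\infty}\le C\|W\|_{H^3}\le C\delta_0$, these factors are $\mathcal O(a)$, $\mathcal O(m)$, etc., with smooth bounded multipliers. A typical term then looks like $a\nabla^2 v$ or $v\cdot\nabla h$ or $(\partial u)^2$. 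I would apply H\"older with $\tfrac1p=\tfrac12+\tfrac1r$, where $r=\tfrac{2p}{2-p}\in[2,3)$ when $p\in[1,\tfrac65)$; this is exactly the range where the Sobolev embedding $H^1\hookrightarrow L^r$ applies, giving, for instance, $\|a\,\nabla^2 v\|_p\le\|a\|_r\|\nabla^2 v\|_2\le C\|a\|_{H^1}\|\nabla W\|_{H^2}\le C\delta\|\nabla W\|_{H^2}$. The quadratic terms in $\nabla u$ appearing in $S_k^1$ and $G^1$ are handled by $\|(\nabla u)^2\|_p\le\|\nabla u\|_2\|\nabla u\|_r$. For $\|F\|_{H^1}$ we simply differentiate once more; each extra derivative costs us a factor bounded in $L^\infty$ by Sobolev ($H^2\hookrightarrow L^\infty$), keeping us within $\|\nabla W\|_{H^2}$ times a small constant.

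The main obstacle is precisely the $L^p$ estimate for $p$ approaching $1$: one needs a second factor with $L^{2p/(2-p)}$ norm, which via Sobolev embedding demands $2p/(2-p)\le 6$, i.e.\ $p\le \tfrac65$. The strict inequality $p<\tfrac65$ used in the statement provides a small margin of safety and makes the constants uniform. Once this is checked term by term for the five expressions $F_1,\ldots,F_5$ (including the strongly coupled ones like $\tfrac{\mu_t}{\rho^2}\partial p\,\partial \rho$ inside $S_k^1$ and the $\varepsilon^2/k$ term in $F_5$), the desired bound $\|F\|_{L^p\cap H^1}\le C\|\nabla W\|_{H^2}$ holds, and the inequality \eqref{inequality3.5} follows at once from the Duhamel integrals.
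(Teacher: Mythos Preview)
Your approach is essentially the paper's: apply the Duhamel formulas with the semigroup decay of Lemmas \ref{lem1} and \ref{lem2}, then bound the sources via H\"older with the splitting $\tfrac1p=\tfrac12+\tfrac{2-p}{2p}$ and absorb one factor using the $H^3$ smallness from Proposition \ref{thm1}. The paper in fact only needs $\|(F_3,F_4,F_5)\|_p$ for the parabolic components (not the full $L^p\cap H^1$), but your hedge there is harmless.

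One side remark is inaccurate: the condition $2p/(2-p)\le 6$ gives $p\le\tfrac32$, not $p\le\tfrac65$, so the Sobolev embedding is \emph{not} the source of the constraint $p<\tfrac65$. That restriction is not actually used in proving this lemma; it enters later (Lemma \ref{lem4.1} and Lemma \ref{lem2.5}) through the requirement that the exponent $\sigma(p,2;1)=\tfrac{3}{2p}-\tfrac14$ exceed $1$ so that the time-convolution kernel is integrable.
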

\begin{proof}
Let $l=1$ in \eqref{decaylemma1}, we then have from \eqref{integral1}
\begin{align*}
\begin{split}
&\|\nabla U(t) \|_2\leq CE_0(1+t)^{-\sigma(p,2;1)}
+C\int_0^t(1+t-s)^{-\sigma(p,2;1)}\|(F_1,F_2)\|_{L^p\cap H^1}\mbox{d}s,
\end{split}
\end{align*}
which together with \eqref{decaylemma2} implies that
\begin{align}\label{f1f1f3f4f5}
\begin{split}
&\|\nabla W(t)\|_2\leq CE_0(1+t)^{-\sigma(p,2;1)}
+C\int_0^t(1+t-s)^{-\sigma(p,2;1)}[\|(F_1,F_2)\|_{L^p\cap H^1}\\
&+\|(F_3,F_4,F_5)\|_{p}]\mbox{d}s.
\end{split}
\end{align}
For $\|(F_1,F_2)\|_{L^p\cap H^1}$, we estimate as follows. For $1\leq p < \frac65$, the term in $F_1$ can be estimated as
\begin{align*}
\|-\gamma\lambda\mbox{div}(av)\|_p&\leq C(\|\partial_iav^i\|_p+\|a\partial_i v^i\|_p)\\
&\leq C\|v\|_{\frac{2p}{2-p}}\|\nabla a\|_2+C\|\nabla v\|_{2}\|a\|_{\frac{2p}{2-p}}\\
&\leq C\delta\|(\nabla a, \nabla v)\|_2.
\end{align*}
With the help of H\"{o}lder inequality and Lemma \ref{2.2}, it holds that
\begin{align*}
\|-\gamma\lambda\mbox{div}(av)\|_{H^1}&\leq C(\|\partial_iav^i\|_2+\|a\partial_i v^i\|_2+\|\nabla\mbox{div}(av)\|_2)\\
&\leq C\delta\|(\nabla a, \nabla v)\|_{H^1}.
\end{align*}
Similarly, the terms in $F_2$ can be estimated as
\begin{align*}
\|(\frac{1}{\rho}-\frac{1}{\bar{\rho}})(\Delta v+\nabla\mbox{div}v)\|_p\leq C\|\nabla^2 v\|_2\|a\|_{\frac{2p}{2-p}}\leq C\delta\|\nabla^2 v\|_2,
\end{align*}
\begin{align*}
&\|(\frac{1}{\rho}-\frac{1}{\bar{\rho}})(\Delta v+\nabla\mbox{div}v)\|_{H^1}\\
&\leq C\|(\frac{1}{\rho}-\frac{1}{\bar{\rho}})(\Delta v+\nabla\mbox{div}v)\|_2+\|\nabla((\frac{1}{\rho}-\frac{1}{\bar{\rho}})(\Delta v+\nabla\mbox{div}v))\|_2\\
&\leq C(\|a\|_{\infty}\|\nabla^2 v\|_2+\|a\|_{\infty}\|\nabla^3 v\|_2+\|\nabla a\|_{\infty}\|\nabla^2 v\|_2)\\
&\leq C\delta\|\nabla v\|_{H^2},
\end{align*}
\begin{align*}
&\|-\frac{1}{\gamma\lambda}\big(\frac{p'(a+\bar{\rho})}{a+\bar{\rho}}
-\frac{p'(\bar{\rho})}{\bar{\rho}}+\frac{2(m+\bar{k})}{3(a+\bar{\rho})}-\frac{2\bar{k}}{3\bar{\rho}}\big)\nabla a\|_p\\
&\leq C(\|\nabla a\|_2\|a\|_{\frac{2p}{2-p}}+\|\nabla a\|_2\|m\|_{\frac{2p}{2-p}})\\
&\leq C\delta\|\nabla a\|_2,
\end{align*}
\begin{align*}
&\|-\frac{1}{\gamma\lambda}\big(\frac{p'(a+\bar{\rho})}{a+\bar{\rho}}
-\frac{p'(\bar{\rho})}{\bar{\rho}}+\frac{2(m+\bar{k})}{3(a+\bar{\rho})}-\frac{2\bar{k}}{3\bar{\rho}}\big)\nabla a\|_{H^1}\\
&\leq C\big(\|\big(\frac{p'(a+\bar{\rho})}{a+\bar{\rho}}
-\frac{p'(\bar{\rho})}{\bar{\rho}}+\frac{2(m+\bar{k})}{3(a+\bar{\rho})}-\frac{2\bar{k}}{3\bar{\rho}}\big)\nabla a\|_2\\
&+\|\nabla\big[\big(\frac{p'(a+\bar{\rho})}{a+\bar{\rho}}
-\frac{p'(\bar{\rho})}{\bar{\rho}}+\frac{2(m+\bar{k})}{3(a+\bar{\rho})}-\frac{2\bar{k}}{3\bar{\rho}}\big)\nabla a\big]\|_2\big)\\
&\leq C(\|a\nabla a+m\nabla a\|_2+\|a\nabla^2 a+m\nabla^2 a+(\nabla a)^2+\nabla a\nabla m\|_2)\\
&\leq C\delta \|\nabla a\|_{H^2},
\end{align*}
\begin{align*}
\|-\frac{2}{3\gamma\lambda}\nabla m\|_p\leq \|\nabla m\|_2\|m+\bar{k}\|_6\|\frac{1}{m+\bar{k}}\|_{\frac{3p}{3-2p}}
\leq C\delta \|\nabla m\|_2,
\end{align*}
\begin{align*}
&\|-\frac{2}{3\gamma\lambda}\nabla m\|_{H^1}\leq C(\|\nabla m\|_2+\|\nabla^2m\|_2)
\\
&\leq C(\|\nabla m\|_3\|m+\bar{k}\|_6\|\frac{1}{m+\bar{k}}\|_{\infty}+\|\nabla^2 m\|_3\|m+\bar{k}\|_6\|\frac{1}{m+\bar{k}}\|_{\infty})\\
&\leq C\delta\|\nabla m\|_{H^2}.
\end{align*}
Thus we have
\begin{align}\label{f1f2}
\|(F_1,F_2)\|_{L^p\cap H^1}\leq C\delta \|(\nabla a,\nabla v, \nabla m)\|_{H^2}.
\end{align}
Next, we estimate for $\|(F_3,F_4,F_5)\|_{p}$. Almost as same as the estimation of $\|(F_1,F_2)\|_{L^p\cap H^1}$, we get
\begin{align}\label{f3f4f5}
\|(F_3,F_4,F_5)\|_{p}\leq C\delta \|(\nabla a,\nabla v, \nabla h, \nabla m, \nabla \varepsilon)\|_{H^2}.
\end{align}
Inserting \eqref{f1f2} and \eqref{f3f4f5} into \eqref{f1f1f3f4f5}, we complete the proof of Lemma \ref{lem3.5}.
\end{proof}
\begin{lem}\label{lem3.6}
Let $W=(U, h, m, \varepsilon)$ be the solution to the problem \eqref{ke-new}, then under the assumptions of Theorem \ref{thm2}, if $\delta>0$ is sufficiently small then it holds that
\begin{align}\label{inequality3.6}
\frac{\mbox{d}M(t)}{\mbox{d}t}+\|\nabla^2a\|_{H^1}^2+\|\nabla^2(v,h,m,\varepsilon)\|_{H^2}^2\leq C\delta \|\nabla(a, v, h, m, \varepsilon)\|_{2}^2,
\end{align}
where $M(t)$ is equivalent to $\|\nabla(a, v,h,m,\varepsilon)\|_{H^2}^2$, i.e., there exists a positive constant $C_2$ such that
\begin{align}\label{energy}
C_2^{-1}\|\nabla(a, v, h, m, \varepsilon)\|_{H^2}^2\leq M(t)\leq C_2\|\nabla(a, v, h, m, \varepsilon)\|_{H^2}^2.
\end{align}
\end{lem}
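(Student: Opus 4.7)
The overall strategy is the standard hyperbolic--parabolic energy method. The equations for $v,h,m,\varepsilon$ enjoy full parabolic dissipation from $\lambda\Delta$, while the continuity equation for $a$ is of transport type, so dissipation for derivatives of $a$ must be manufactured from the coupling between $a$ and $v$ in the momentum equation via the classical cross-term trick of Matsumura--Nishida type.

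\textbf{Step 1 (direct energy estimates).} For every multi-index $\alpha$ with $1\le|\alpha|\le 3$, apply $\partial_x^\alpha$ to each of the five equations in \eqref{ke-new}, pair with $\partial_x^\alpha a,\partial_x^\alpha v,\partial_x^\alpha h,\partial_x^\alpha m,\partial_x^\alpha\varepsilon$ respectively, integrate over $\mathbb{R}^3$, and add. The symmetric coefficient $\gamma$ makes the linear cross terms $\gamma\langle\nabla\partial_x^\alpha a,\partial_x^\alpha v\rangle+\gamma\langle\mbox{div}\,\partial_x^\alpha v,\partial_x^\alpha a\rangle$ cancel after one integration by parts, leaving on the left
\begin{equation*}
\tfrac12\tfrac{d}{dt}\sum_{1\le|\alpha|\le 3}\|\partial_x^\alpha(a,v,h,m,\varepsilon)\|_2^2+\lambda\sum_{1\le|\alpha|\le 3}\bigl(\|\nabla\partial_x^\alpha v\|_2^2+\|\mbox{div}\,\partial_x^\alpha v\|_2^2+\|\nabla\partial_x^\alpha(h,m,\varepsilon)\|_2^2\bigr),
\end{equation*}
which already supplies the full dissipation $\|\nabla^2(v,h,m,\varepsilon)\|_{H^2}^2$ required on the left of \eqref{inequality3.6}.

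\textbf{Step 2 (dissipation for $a$ and construction of $M$).} Read the momentum equation as $\gamma\nabla a=-v_t+\lambda\Delta v+\lambda\nabla\mbox{div}\,v+F_2$. For $|\alpha|=1,2$, pair $\partial_x^\alpha\nabla a$ with $\partial_x^\alpha$ of this identity; handle $\int\partial_x^\alpha\nabla a\cdot\partial_x^\alpha v_t\,dx$ by integration by parts in time, substituting $a_t=-\gamma\mbox{div}\,v+F_1$ from the continuity equation. After summation this yields
\begin{equation*}
\gamma\|\nabla^2 a\|_{H^1}^2\le-\frac{d}{dt}\sum_{|\alpha|=1,2}\int\partial_x^\alpha\nabla a\cdot\partial_x^\alpha v\,dx+C\|\nabla^2 v\|_{H^2}^2+(\text{nonlinear}).
\end{equation*}
Multiplying by a small constant $\kappa>0$ and adding to the Step 1 inequality, the extra $C\kappa\|\nabla^2 v\|_{H^2}^2$ is absorbed into the parabolic dissipation of $v$ coming from Step 1. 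Setting
\begin{equation*}
M(t):=\tfrac12\sum_{1\le|\alpha|\le 3}\|\partial_x^\alpha(a,v,h,m,\varepsilon)\|_2^2+\kappa\sum_{|\alpha|=1,2}\int\partial_x^\alpha\nabla a\cdot\partial_x^\alpha v\,dx,
\end{equation*}
Cauchy--Schwarz gives the equivalence \eqref{energy} for $\kappa$ small, which closes the linear part of \eqref{inequality3.6}.

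\textbf{Main obstacle.} The real work lies in controlling the nonlinear contributions $\langle\partial_x^\alpha F_i,\partial_x^\alpha(\cdot)\rangle$ and their cross-term analogues, showing each is either absorbable into the left-hand dissipation or bounded by $C\delta\|\nabla(a,v,h,m,\varepsilon)\|_2^2$. Using the a priori bound $\|(a,v,h,m,\varepsilon)\|_{H^3}\le C\delta$ from Proposition~\ref{thm1} and the Sobolev embeddings $H^1\hookrightarrow L^6$ and $H^2\hookrightarrow L^\infty$ from Lemma~\ref{2.2}, every product appearing in $F_1,\dots,F_5$ comes with at least one small factor. The most delicate terms are those of top order $|\alpha|=3$ applied to the strongly coupled nonlinearities such as $v\cdot\nabla h$, $\bigl(\tfrac{1}{\rho}-\tfrac{1}{\bar\rho}\bigr)\Delta v$, and the pressure-type piece $\bigl(\tfrac{p'(a+\bar\rho)}{a+\bar\rho}-\tfrac{p'(\bar\rho)}{\bar\rho}+\tfrac{2(m+\bar k)}{3(a+\bar\rho)}-\tfrac{2\bar k}{3\bar\rho}\bigr)\nabla a$. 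For these the Leibniz expansion must always place $L^\infty$ or $L^6$ on the lowest-derivative factor so that an $L^2$ norm of the top derivative survives inside the dissipation on the left, while the $\delta$ factor is supplied by the remaining Sobolev-bounded low-derivative factor; no genuinely first-order residual without a $\delta$ factor can survive, since every nonlinearity in $F_i$ is at least quadratic in the small unknowns.
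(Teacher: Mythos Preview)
Your proposal follows essentially the same route as the paper: energy estimates on $\partial_x^\alpha$ of the system for $1\le|\alpha|\le 3$, the Matsumura--Nishida cross term $\langle\partial_x^\alpha v,\nabla\partial_x^\alpha a\rangle$ for $1\le|\alpha|\le 2$ to recover dissipation of $a$, and the definition of $M$ as a weighted combination of the two (the paper puts a large constant $C_1$ on the energy instead of your small $\kappa$ on the cross term, which is equivalent). One caveat: your blanket assertion that ``every nonlinearity in $F_i$ is at least quadratic'' is not literally true for the $F_i$ as the paper writes them---$F_2$ contains $-\tfrac{2}{3\gamma\lambda}\nabla m$, $F_3$ contains a piece $-\gamma\lambda p'(\bar\rho)\mbox{div}\,v$, and $F_4$ contains $-\varepsilon$, all of which are linear; the paper deals with these by ad hoc insertions of factors like $(m+\bar k)\cdot\tfrac{1}{m+\bar k}$ to manufacture a $\delta$, so you should be aware that these terms require separate handling rather than falling under the general quadratic umbrella you describe.
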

\begin{proof}
Let $\alpha$ be any multi-index with $1\leq |\alpha|\leq 3$. Applying the operator $\partial_x^{\alpha}$ to \eqref{ke-new} and then taking inner product with $\partial_x^{\alpha}a$, $\partial_x^{\alpha}v$, $\partial_x^{\alpha}h$, $\partial_x^{\alpha}m$ and $\partial_x^{\alpha}\varepsilon$, one gets
\begin{align*}
&\frac{1}{2}\frac{\mbox{d}}{\mbox{d}t}\|\partial_x^{\alpha}(a,v,h,m,\varepsilon)\|_2^2
+\lambda\|\nabla\partial_x^{\alpha}v\|_2^2+\lambda\|\mbox{div}\partial_x^{\alpha}v\|_2^2+\lambda\|\nabla\partial_x^{\alpha}(h,m,\varepsilon)\|_2^2\\
&=\langle\partial_x^{\alpha}a,\partial_x^{\alpha}F_1\rangle+\langle\partial_x^{\alpha}v,\partial_x^{\alpha}F_2\rangle
+\langle\partial_x^{\alpha}h,\partial_x^{\alpha}F_3\rangle+\langle\partial_x^{\alpha}m,\partial_x^{\alpha}F_4\rangle
\\
&+\langle\partial_x^{\alpha}\varepsilon,\partial_x^{\alpha}F_5\rangle\\
&=:J_1^{\alpha}(t)+J_2^{\alpha}(t)+J_3^{\alpha}(t)+J_4^{\alpha}(t)+J_5^{\alpha}(t),
\end{align*}
where $J_i^{\alpha}(t)$, $i=1,2,3,4,5$, are the corresponding terms in the above equation which will be estimated as follows.

Now let's estimate for $J_1^{\alpha}(t)$. It follows from Lemma \ref{2.2} that
\begin{align*}
J_1^1&\sim\langle\partial_j\partial_i(av^i),\partial_ja\rangle\sim\langle\partial_j\partial_iav^i+\partial_ia\partial_jv^i
+\partial_ja\partial_iv^i+\partial_i\partial_jv^ia,\partial_ja\rangle\\
&\leq C\|\nabla^2a\|_2\|\nabla a\|_2\|v\|_{\infty}+\|\nabla a\|_2^2\|\nabla v\|_{\infty}+\|\nabla^2v\|_2\|\nabla a\|_2\|a\|_{\infty}\\
&\leq \delta\|\nabla a\|_2^2+\frac{C}{\delta}\|\nabla^2a\|_2^2\|v\|_{H^2}^2+\frac{C}{\delta}\|\nabla^2v\|_2^2\|a\|_{H^2}^2,
\end{align*}
\begin{align*}
J_1^2&\sim\langle\partial_x^{\beta}\partial_i(av^i),\partial_x^{\alpha+\xi}a\rangle\sim\langle\nabla^2av+\nabla a\nabla v+\nabla^2va,\nabla^3a\rangle\\
&\leq \delta\|\nabla^3 a\|_2^2+\frac{C}{\delta}(\|\nabla^2a\|_2^2\|v\|_{\infty}^2+\|\nabla a\|_2^2\|\nabla v\|_{\infty}^2+\|\nabla^2v\|_2^2\|a\|_{\infty}^2)\\
&\leq \delta\|\nabla^3 a\|_2^2+\frac{C}{\delta}(\|\nabla^2a\|_2^2\|v\|_{H^2}^2+\|\nabla a\|_2^2\|\nabla v\|_{H^2}^2+\|\nabla^2v\|_2^2\|a\|_{H^2}^2),
\end{align*}
where $\alpha=\beta+\xi$, with $|\beta|=|\xi|=1$.

For $J_1^3$, we have from multi-indices $\beta$ and $\xi$ with $|\beta|=2$, $|\xi|=1$,
\begin{align*}
J_1^3&\sim\langle\partial_x^{\beta+\xi}\partial_i(av^i),\partial_x^{\beta+\xi}a\rangle
\\
&\sim -\int_{\mathbb{R}^3}(\partial_x^{\beta+\xi}a)^2\mbox{div}v\mbox{d}x+\langle\nabla a\nabla^3v+\nabla^3 a\nabla v+\nabla^2a\nabla^2v+a\nabla^4v,\nabla^3a\rangle\\
&\leq \delta\|\nabla^3 a\|_2^2+\frac{C}{\delta}(\|\nabla^2a\|_3^2\|\nabla^2v\|_{6}^2+\|\nabla^3 v\|_2^2\|\nabla a\|_{\infty}^2+\|\nabla^4v\|_2^2\|a\|_{\infty}^2)\\
&\leq \delta\|\nabla^3 a\|_2^2+\frac{C}{\delta}(\|\nabla^2a\|_2^2\|v\|_{H^2}^2+\|\nabla a\|_2^2\|\nabla v\|_{H^2}^2+\|\nabla^2v\|_2^2\|a\|_{H^2}^2).
\end{align*}
Hence
\begin{align}\label{j1}
J_1^{\alpha}\leq C\delta\sum_{1\leq |\alpha|\leq3}\|\partial_x^{\alpha}a\|_2^2+C\delta\sum_{1\leq |\alpha|\leq4}\|\partial_x^{\alpha}v\|_2^2.
\end{align}
Similarly, by Lemma \ref{2.2} and H\"{o}lder inequality,  $J_2^{\alpha}$ can be estimated as
\begin{align*}
&J_2^1\sim\langle\partial_j[(\frac{1}{\rho}-\frac{1}{\bar{\rho}})(\Delta v+\nabla\mbox{div}v)-\frac{1}{\gamma\lambda}\big(\frac{p'(a+\bar{\rho})}{a+\bar{\rho}}
-\frac{p'(\bar{\rho})}{\bar{\rho}}+\frac{2(m+\bar{k})}{3(a+\bar{\rho})}-\frac{2\bar{k}}{3\bar{\rho}}\big)\nabla a\\
&-\frac{2}{3\gamma\lambda}\nabla m],\partial_jv\rangle\\
&\sim\langle\partial_j a(\Delta v+\nabla\mbox{div}v)+a(\partial_j\Delta v+\partial_j\nabla\mbox{div}v)+\partial_ja\cdot\nabla a+\partial_jm\cdot\nabla a+a\partial_j\nabla a\\
&+m\partial_j\nabla a+\partial_j\nabla m,\partial_jv\rangle\\
&\leq C(\|\nabla a\|_3\|\nabla v\|_6\|\nabla^2 v\|_2+\|\nabla^3 v\|_2\|\nabla v\|_2\|a\|_{\infty}+\|\nabla (a,m)\|_3\|\nabla a\|_6\|\nabla v\|_2\\
&+\|\nabla^2 a\|_2\|\nabla v\|_2\|m\|_{\infty}+\|\nabla^2 a\|_2\|\nabla v\|_2\|a\|_{\infty}+\|\nabla^2 m\|_2\|\nabla v\|_3\|\nabla m\|_2)\\
&\leq \delta (\|\nabla^2v\|_2^2+\|\nabla^3v\|_2^2+\|\nabla^2(a,m)\|_2^2)+\frac{C}{\delta}(\|\nabla v\|_2^2\|(a,m)\|_{H^2}^2+\|\nabla v\|_{H^1}^2\|\nabla m\|_2^2)\\
&\leq C\delta\|\nabla^2(a,m)\|_2^2+C\delta\sum_{1\leq|\alpha|\leq3}\|\partial_x^{\alpha}v\|_2^2,
\end{align*}
\begin{align*}
&J_2^2\sim\langle\partial_x^{\beta}[(\frac{1}{\rho}-\frac{1}{\bar{\rho}})(\Delta v+\nabla\mbox{div}v)-\frac{1}{\gamma\lambda}\big(\frac{p'(a+\bar{\rho})}{a+\bar{\rho}}
-\frac{p'(\bar{\rho})}{\bar{\rho}}+\frac{2(m+\bar{k})}{3(a+\bar{\rho})}-\frac{2\bar{k}}{3\bar{\rho}}\big)\nabla a\\
&-\frac{2}{3\gamma\lambda}\nabla m],\partial_x^{\alpha+\xi}v\rangle\\
&\sim\langle\nabla a(\Delta v+\nabla\mbox{div}v)+a\nabla^3v+\nabla a\nabla a+\nabla a\nabla m+a\nabla^2 a+m\nabla^2a+\nabla^2 m,\nabla^3v\rangle\\
&\leq C(\|\nabla a\|_3\|\nabla^2 v\|_6\|\nabla^3 v\|_2+\|\nabla^3 v\|_2^2\|a\|_{\infty}+\|\nabla a\|_3\|\nabla a\|_6\|\nabla^3 v\|_2\\
&+\|\nabla a\|_6\|\nabla^3 v\|_2\|\nabla m\|_3+\|\nabla^2 a\|_2\|\nabla^3 v\|_2\|(a,m)\|_{\infty}+\|\nabla^2 m\|_3\|\nabla^2 v\|_2\| m+\bar{k}\|_6)\\
&\leq C\delta\sum_{1\leq|\alpha|\leq3}\|\partial_x^{\alpha}a\|_2^2
+C\delta\sum_{1\leq|\alpha|\leq3}\|\partial_x^{\alpha}(v,m)\|_2^2,\ \mbox{with}\  \alpha=\beta+\xi,\ |\beta|=|\xi|=1,
\end{align*}
\begin{align*}
&J_2^3\sim\langle\partial_x^{\beta}[(\frac{1}{\rho}-\frac{1}{\bar{\rho}})(\Delta v+\nabla\mbox{div}v)-\frac{1}{\gamma\lambda}\big(\frac{p'(a+\bar{\rho})}{a+\bar{\rho}}
-\frac{p'(\bar{\rho})}{\bar{\rho}}+\frac{2(m+\bar{k})}{3(a+\bar{\rho})}-\frac{2\bar{k}}{3\bar{\rho}}\big)\nabla a\\
&-\frac{2}{3\gamma\lambda}\nabla m],\partial_x^{\alpha+\xi}v\rangle\\
&\sim\langle a\nabla^4v+\nabla a\nabla^3 v+\nabla^2 a\nabla^2 v+a\nabla^3 a+\nabla a\nabla^2a+\nabla m\nabla^2 a+\nabla^2m\nabla a\\
&+m\nabla^3a+\nabla^3 m,\nabla^4v\rangle\\
&\leq C\delta\|\nabla^4v\|_2^2+\frac{C}{\delta}(\|\nabla a\|_{\infty}^2\|\nabla^3 v\|_2^2+\|\nabla^2 v\|_3^2\|\nabla^2a\|_6^2+\|\nabla^3 a\|_2^2\| a\|_{\infty}^2\\
&+\|\nabla^2 a\|_2^2\|\nabla(a,m)\|_{\infty}^2+\|\nabla^2 m\|_2^2\|\nabla a\|_{\infty}^2+\| \nabla^3a\|_2^2\|m\|_{\infty}^2+\|\nabla^3m\|_3\|\nabla m\|_2\\
&\leq C\delta\sum_{1\leq|\alpha|\leq3}\|\partial_x^{\alpha}a\|_2^2
+C\delta\sum_{1\leq|\alpha|\leq4}\|\partial_x^{\alpha}(v,m)\|_2^2,\ \mbox{with}\  \alpha=\beta+\xi,\ |\beta|=2,\ |\xi|=1.
\end{align*}
Incorporating the above estimates, it holds that
\begin{align}\label{j2}
J_2^{\alpha}(t)\leq C\delta\sum_{1\leq|\alpha|\leq3}\|\partial_x^{\alpha}a\|_2^2
+C\delta\sum_{1\leq|\alpha|\leq4}\|\partial_x^{\alpha}(v,m)\|_2^2.
\end{align}

Moreover, $J_3^{\alpha}(t)$,  $J_4^{\alpha}(t)$ and $J_5^{\alpha}(t)$ can be estimated similarly by using Lemma \ref{2.2} and H\"{o}lder inequality,
\begin{align}\label{j3}
|J_3^{\alpha}(t),J_4^{\alpha}(t), J_5^{\alpha}(t) |\leq C\delta\sum_{1\leq|\alpha|\leq3}\|\partial_x^{\alpha}a\|_2^2
+C\delta\sum_{1\leq|\alpha|\leq4}\|\partial_x^{\alpha}(v,h,m,\varepsilon)\|_2^2.
\end{align}

On the other hand, we apply $\partial_x^{\alpha}$ to $\eqref{ke-new}_2$ with $1\leq|\alpha|\leq2$, and then take inner product with $\nabla\partial_x^{\alpha}a$ to yield
\begin{align}\label{ke-new-1}
\langle\partial_x^{\alpha}v_t,\nabla\partial_x^{\alpha}a\rangle+\gamma\|\nabla\partial_x^{\alpha}a\|_2^2
=\lambda\langle\partial_x^{\alpha}\Delta v+\partial_x^{\alpha}\nabla\mbox{div}v,\partial_x^{\alpha}\nabla a\rangle+\langle\partial_x^{\alpha}F_2, \partial_x^{\alpha}\nabla a\rangle,
\end{align}
and similarly from $\eqref{ke-new}_1$,
\begin{align}\label{ke-new-2}
\langle\partial_x^{\alpha}v,\nabla\partial_x^{\alpha}a_t\rangle=-\gamma\langle\partial_x^{\alpha} v,\nabla\partial_x^{\alpha} \mbox{div}v\rangle+\langle\partial_x^{\alpha}v, \nabla\partial_x^{\alpha}F_1\rangle.
\end{align}
Adding \eqref{ke-new-1} and \eqref{ke-new-2} together implies
\begin{align}
\begin{split}
&\frac{\mbox{d}}{\mbox{d}t}\langle\partial_x^{\alpha}v,\nabla\partial_x^{\alpha}a\rangle
+\gamma\|\nabla\partial_x^{\alpha}a\|_2^2=\lambda\langle\partial_x^{\alpha}\Delta v+\partial_x^{\alpha}\nabla\mbox{div}v,\partial_x^{\alpha}\nabla a\rangle\\
&+\langle\partial_x^{\alpha}F_2, \partial_x^{\alpha}\nabla a\rangle-\gamma\langle\partial_x^{\alpha} v,\nabla\partial_x^{\alpha} \mbox{div}v\rangle+\langle\partial_x^{\alpha}v, \nabla\partial_x^{\alpha}F_1\rangle.
\end{split}
\end{align}
By H\"{o}lder inequality and similar to the estimation of $J_2^{\alpha}(t)$, the right hand side can be bounded by
\begin{align}\label{j4}
\frac{\gamma}{2}\|\nabla\partial_x^{\alpha}a\|_2^2+C\sum_{1\leq|\alpha|\leq2}\|\partial_x^{\alpha}\nabla v\|_{H^1}^2+C\delta\sum_{1\leq|\alpha|\leq3}\|\partial_x^{\alpha}a\|_2^2
+C\delta\sum_{1\leq|\alpha|\leq4}\|\partial_x^{\alpha}(v,h,m,\varepsilon)\|_2^2.
\end{align}
Therefore, if we define
\begin{align*}
M(t)=C_1\sum_{1\leq|\alpha|\leq3}\|\partial_x^{\alpha}(a,v,h,m,\varepsilon)\|_2^2
+\sum_{1\leq|\alpha|\leq2}\langle\partial_x^{\alpha}v,\nabla\partial_x^{\alpha}a\rangle,
\end{align*}
and choosing $\delta$ sufficiently small, then \eqref{j1}-\eqref{j3} and \eqref{j4} imply that
\begin{align*}
\frac{\mbox{d}M(t)}{\mbox{d}t}+C_1(\|\nabla^2a\|_{H^1}^2+\|\nabla^2(v,h,m,\varepsilon)\|_{H^2}^2)\leq C\delta \|\nabla(a, v, h, m, \varepsilon)\|_{2}^2,
\end{align*}
where $C_1$ is a positive constant independent of $\delta$. Thus we arrive at the proof of the lemma.
\end{proof}

\section{Optimal convergence rates}
The optimal convergence rates can be proved by first improving the estimates given in Lemma \ref{lem3.5} and Lemma \ref{lem3.6} to the estimates on the $L^2$-norms of solutions to higher power and then letting the power tend to infinity. By the inequalities \eqref{inequality3.5} and \eqref{inequality3.6}, we have the following lemmas.
\begin{lem}\label{lem4.1}
Let $W=(U, h, m, \varepsilon)$ be the solution to the problem \eqref{ke-new}, then under the assumptions of Theorem \ref{thm2}, if $\delta>0$ is sufficiently small, then for any integer $n\geq 1$, and for some  $p\in[1,\frac65)$, it holds that
\begin{align}\label{inequality4.1}
\int_0^t(1+s)^l\|\nabla W(s)\|_2^{2n}\mbox{d}s\leq (CE_0)^{2n}+(C\delta)^{2n}\int_0^t(1+s)^l\|\nabla^2 W(s)\|_{H^1}^{2n}\mbox{d}s,
\end{align}
where $l=0, 1, \cdots, N=[2n(\frac{3}{2p}-\frac{1}{4})-2]$, the constant $E_0$ is given in Lemma \ref{lem3.5}.
\end{lem}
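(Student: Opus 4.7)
The plan is to raise the pointwise-in-time estimate from Lemma~\ref{lem3.5} to the $2n$-th power, apply a weighted H\"{o}lder splitting of the convolution kernel so that Lemma~\ref{lem2.5} can be used, and then integrate against $(1+s)^{l}\,ds$. Write $\sigma=\sigma(p,2;1)=\frac{3}{2p}-\frac{1}{4}$; the hypothesis $p<\frac{6}{5}$ guarantees $\sigma>1$, which is essential for Lemma~\ref{lem2.5}.

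Starting from Lemma~\ref{lem3.5},
\[
\|\nabla W(s)\|_{2}\leq CE_{0}(1+s)^{-\sigma}+C\delta\int_{0}^{s}(1+s-\tau)^{-\sigma}\|\nabla W(\tau)\|_{H^{2}}\,d\tau,
\]
I would decompose the kernel as $(1+s-\tau)^{-\sigma}=(1+s-\tau)^{-\alpha}(1+s-\tau)^{-\beta}$ with $\alpha+\beta=\sigma$, and insert the trivial factor $(1+\tau)^{l/(2n)}\cdot(1+\tau)^{-l/(2n)}=1$. H\"{o}lder in $\tau$ with conjugate exponents $\tfrac{2n}{2n-1}$ and $2n$ then gives
\[
\int_{0}^{s}(1+s-\tau)^{-\sigma}\|\nabla W\|_{H^{2}}\,d\tau\leq A(s)^{\frac{2n-1}{2n}}B(s)^{\frac{1}{2n}},
\]
with
\[
A(s)=\int_{0}^{s}(1+s-\tau)^{-\frac{2n\alpha}{2n-1}}(1+\tau)^{-\frac{l}{2n-1}}\,d\tau,\quad B(s)=\int_{0}^{s}(1+s-\tau)^{-2n\beta}(1+\tau)^{l}\|\nabla W(\tau)\|_{H^{2}}^{2n}\,d\tau.
\]
I would pick $\alpha,\beta>0$ subject to $\alpha>\max\{l/(2n),(2n-1)/(2n)\}$ and $\beta>1/(2n)$; such a split is compatible with $\alpha+\beta=\sigma$ precisely when $l<2n\sigma-1$, i.e.\ when $l\leq N=[2n\sigma-2]$. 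With this choice Lemma~\ref{lem2.5} applies to $A(s)$ with $r_{1}=\tfrac{2n\alpha}{2n-1}>1$ and $r_{2}=\tfrac{l}{2n-1}\leq r_{1}$, yielding $A(s)\leq C(1+s)^{-l/(2n-1)}$ and hence $A(s)^{(2n-1)/(2n)}\leq C(1+s)^{-l/(2n)}$.

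Raising the resulting bound to the $2n$-th power and multiplying by $(1+s)^{l}$, the factor $(1+s)^{-l}$ produced by $A^{2n-1}$ cancels the weight $(1+s)^{l}$, giving
\[
(1+s)^{l}\|\nabla W(s)\|_{2}^{2n}\leq C(CE_{0})^{2n}(1+s)^{l-2n\sigma}+C(C\delta)^{2n}\int_{0}^{s}(1+s-\tau)^{-2n\beta}(1+\tau)^{l}\|\nabla W(\tau)\|_{H^{2}}^{2n}\,d\tau.
\]
Integrating in $s\in[0,t]$, the first term is bounded by a constant since $l-2n\sigma<-1$, while for the second, Fubini together with $2n\beta>1$ (which makes $\int_{\tau}^{t}(1+s-\tau)^{-2n\beta}\,ds\leq C$) produces
\[
\int_{0}^{t}(1+s)^{l}\|\nabla W(s)\|_{2}^{2n}\,ds\leq(CE_{0})^{2n}+C(C\delta)^{2n}\int_{0}^{t}(1+\tau)^{l}\|\nabla W(\tau)\|_{H^{2}}^{2n}\,d\tau.
\]
Finally, splitting $\|\nabla W\|_{H^{2}}^{2n}\leq C(\|\nabla W\|_{2}^{2n}+\|\nabla^{2}W\|_{H^{1}}^{2n})$ and absorbing the resulting $\|\nabla W\|_{2}^{2n}$ integral into the left-hand side (possible once $\delta$ is small enough that the multiplicative constant is less than $\tfrac{1}{2}$) yields the claimed inequality.

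The main obstacle is the kernel split itself: the three requirements $\alpha>(2n-1)/(2n)$ (so $r_{1}>1$ in Lemma~\ref{lem2.5}), $\alpha\geq l/(2n)$ (so $r_{2}\leq r_{1}$), and $\beta>1/(2n)$ (so the remaining kernel is integrable after Fubini) must all be simultaneously compatible with $\alpha+\beta=\sigma$, and this compatibility is equivalent to $l<2n\sigma-1$. This is precisely the range $l\leq[2n\sigma-2]=N$ stated in the lemma, and the fact that $N$ grows linearly in $n$ is exactly what will allow the subsequent bootstrap, by letting $n\to\infty$, to pass from these weighted integral bounds to the optimal pointwise decay rates.
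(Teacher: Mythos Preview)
Your proof is correct and follows essentially the same route as the paper: raise \eqref{inequality3.5} to the $2n$-th power, split the convolution kernel via H\"older with exponents $\tfrac{2n}{2n-1}$ and $2n$, bound the first factor by Lemma~\ref{lem2.5}, integrate the second using Fubini, and absorb the $\|\nabla W\|_2^{2n}$ term for small $\delta$. The only cosmetic difference is that the paper fixes the split by taking $2n\beta=\tfrac{4}{3}$ (so the $B$-kernel is $(1+\tau-s)^{-4/3}$), whereas you leave $\alpha,\beta$ as free parameters constrained by $\alpha+\beta=\sigma$, $\alpha>\tfrac{2n-1}{2n}$, $\beta>\tfrac{1}{2n}$.
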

\begin{lem}\label{lem4.2}
Let $W=(U, h, m, \varepsilon)$ be the solution to the problem \eqref{ke-new}, then under the assumptions of Theorem \ref{thm2}, if $\delta>0$ is sufficiently small, then for any integer $n\geq 1$, and for some  $p\in[1,\frac65)$, it holds that
\begin{align}\label{inequality4.2}
\begin{split}
&(1+t)^lM(t)^{n}+n\int_0^t(1+s)^lM(s)^{n-1}\|\nabla^2W(s)\|_{H^1}^2\mbox{d}s\\
&\leq 2M(0)^{n}+(C_3E_0)^{2n}+8C_2n(\frac{3}{2p}
-\frac{1}{4})\int_0^t(1+s)^{l-1}M(s)^{n-1}\|\nabla^2W(s)\|_{H^1}^2\mbox{d}s,
\end{split}
\end{align}
where $l=0, 1, \cdots, N=[2n(\frac{3}{2p}-\frac{1}{4})-2]$, the constant $C_2$ is given in Lemma \ref{lem3.6}, $C_3$ is independent of $\delta$.
\end{lem}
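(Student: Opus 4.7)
The plan is to boost Lemma~\ref{lem3.6} to an $n$-th power inequality with time weight $(1+t)^l$ and close the residual low-order norm $\|\nabla W\|_2^2$ by feeding it into Lemma~\ref{lem4.1}. Multiplying the inequality $\frac{dM}{dt}+\|\nabla^2 W\|_{H^1}^2 \leq C\delta\|\nabla W\|_2^2$ of Lemma~\ref{lem3.6} by $nM(t)^{n-1}(1+t)^l$ and exploiting the product-rule identity $\frac{d}{dt}[(1+t)^l M^n] = l(1+t)^{l-1}M^n + n(1+t)^l M^{n-1}\frac{dM}{dt}$, an integration in $t$ yields
\begin{equation*}
(1+t)^l M(t)^n + n\int_0^t (1+s)^l M^{n-1}\|\nabla^2 W\|_{H^1}^2\,\mbox{d}s \leq M(0)^n + l\int_0^t (1+s)^{l-1}M^n\,\mbox{d}s + Cn\delta\int_0^t (1+s)^l M^{n-1}\|\nabla W\|_2^2\,\mbox{d}s.
\end{equation*}

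The two integrals on the right must be reduced to the target form on the right of \eqref{inequality4.2}. Using the equivalence \eqref{energy}, $M\leq C_2(\|\nabla W\|_2^2 + \|\nabla^2 W\|_{H^1}^2)$, I split $M^n \leq C_2 M^{n-1}\|\nabla^2 W\|_{H^1}^2 + C_2 M^{n-1}\|\nabla W\|_2^2$. Since $l \leq N \leq 2n\sigma(p,2;1) - 2$, the first piece contributes $C_2 l\int(1+s)^{l-1}M^{n-1}\|\nabla^2 W\|_{H^1}^2\,\mbox{d}s \leq 2C_2 n\sigma(p,2;1)\int(1+s)^{l-1}M^{n-1}\|\nabla^2 W\|_{H^1}^2\,\mbox{d}s$, i.e.\ a quarter of the target coefficient. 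The surviving $M^{n-1}\|\nabla W\|_2^2$ integrals are treated by the parametric Young inequality $M^{n-1}\|\nabla W\|_2^2 \leq \frac{(n-1)\eta}{n}M^n + \frac{1}{n\eta^{n-1}}\|\nabla W\|_2^{2n}$ with a small parameter $\eta>0$. The $\|\nabla W\|_2^{2n}$ integrals are then bounded by Lemma~\ref{lem4.1} by a constant multiple of $E_0^{2n}$ (feeding into the $(C_3 E_0)^{2n}$ contribution) plus $(C\delta)^{2n}\int(1+s)^l\|\nabla^2 W\|_{H^1}^{2n}\,\mbox{d}s$, and the elementary bound $\|\nabla^2 W\|_{H^1}^{2n} \leq C_2^{n-1}M^{n-1}\|\nabla^2 W\|_{H^1}^2$ returns the latter to the LHS dissipation, where it is absorbed once $\delta$ is chosen small enough.

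The $\int(1+s)^l M^n\,\mbox{d}s$ residuals produced by Young's inequality are handled by iterating the same splitting a finite number of times, with $\eta$ chosen so that the absorption constants stay below $1/2$ at each step. The main obstacle is the simultaneous tuning of $\eta$ and $\delta$: enough smallness is required to absorb every new $\int(1+s)^l M^{n-1}\|\nabla^2 W\|_{H^1}^2\,\mbox{d}s$ contribution into the LHS, while the accumulated constants must keep the form $C_3^{2n}$ rather than growing like $n^{Cn}$, since Theorem~\ref{thm2} eventually sends $n \to \infty$. The factor $2$ in $2M(0)^n$ on the right of \eqref{inequality4.2} provides exactly the slack needed to absorb the fractional $M(t)^n$ produced by Young's; likewise, the coefficient $8C_2 n\sigma(p,2;1)$ accommodates the initial $2C_2 n\sigma$ together with the further cumulative contributions from the iteration.
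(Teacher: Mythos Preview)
Your proposal is correct and follows essentially the same route as the paper: multiply \eqref{inequality3.6} by $n(1+t)^lM^{n-1}$, integrate, split the resulting $M^n$ factors via \eqref{energy}, apply the parametric Young inequality with $\eta=\tfrac{1}{2C_2}$ to isolate $\|\nabla W\|_2^{2n}$, feed those into Lemma~\ref{lem4.1}, and return $\|\nabla^2W\|_{H^1}^{2n}$ to the dissipation via $\|\nabla^2W\|_{H^1}^{2n}\le C_2^{n-1}M^{n-1}\|\nabla^2W\|_{H^1}^2$. One small narrative correction: the factor $2$ in $2M(0)^n$ does not arise from absorbing a Young-produced $M(t)^n$ on the left, but from absorbing the $\delta$-small piece $C\delta n\!\int(1+s)^lM^{n-1}\|\nabla^2W\|_{H^1}^2\,\mbox{d}s$ into the left-hand dissipation integral, which multiplies the remaining right-hand side by at most $2$.
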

\begin{rem}
Lemma \ref{lem4.1} and Lemma \ref{lem4.2} are similar to that in \cite{tong}. But Lemma \ref{lem4.1} and Lemma \ref{lem4.2} in this work hold for general $p$ with $p\in[1,\frac65)$. Note that lemmas hold only for $p=1$ in \cite{tong}. For completeness, we state the proofs of Lemma \ref{lem4.1} and Lemma \ref{lem4.2} as follows.
\end{rem}
\emph{Proof of Lemma \ref{lem4.1}.}  Fix any integer $n\geq1$. By taking \eqref{inequality3.5} to power $2n$ and multiplying it by $(1+t)^l$, $l=0,1,\cdots, N$, integrating the resulting inequality over $[0,t]$ gives that
\begin{align}\label{lemc4.2}
\begin{split}
&\int_0^t(1+\tau)^l\|\nabla W(\tau)\|_2^{2n}\mbox{d}\tau\leq (CE_0)^{2n}\int_0^t(1+\tau)^{-2n(\frac{3}{2p}-\frac{1}{4})+l}\mbox{d}\tau
\\
&+(C\delta)^{2n}\int_0^t(1+\tau)^l\big[\int_0^{\tau}(1+\tau-s)^{-(\frac{3}{2p}-\frac{1}{4})}\|\nabla W(s)\|_{H^2}\mbox{d}s\big]^{2n}\mbox{d}\tau.
\end{split}
\end{align}
It follows from the H\"{o}lder inequality that
\begin{align}\label{lemc4.3}
\begin{split}
&\big[\int_0^{\tau}(1+\tau-s)^{-(\frac{3}{2p}-\frac{1}{4})}\|\nabla W(s)\|_{H^2}\mbox{d}s\big]^{2n}
\\&\leq \big[\int_0^{\tau}(1+\tau-s)^{-r_1}(1+s)^{-r_2}\mbox{d}s\big]^{2n-1}\\
&\times \int_0^{\tau}(1+\tau-s)^{-\frac{4}{3}}(1+s)^l\|\nabla W(s)\|_{H^2}^{2n}\mbox{d}s,
\end{split}
\end{align}
where $$r_1=(\frac{3}{2p}-\frac{1}{4}-\frac{2}{3n})\frac{2n}{2n-1}$$
and $$r_2=\frac{l}{2n-1}.$$
Notice that
$\frac{3}{p}-\frac{11}{6}\leq r_1\leq \frac{3}{2p}-\frac{1}{4}$
and $r_2\in[0,r_1]$ for $n\geq1$ and $0\leq l\leq N=[2n(\frac{3}{2p}-\frac{1}{4})-2]$, from Lemma \ref{lem2.5},
one deduces that
\begin{align}\label{lemc4.4} \int_0^{\tau}(1+\tau-s)^{-r_1}(1+s)^{-r_2}\mbox{d}s\leq C_1(r_1,r_2)(1+\tau)^{-r_2}\leq C(1+\tau)^{-r_2},
\end{align}
where $C_1(r_1,r_2)$ given by \eqref{2.10} is bounded uniformly for $n\geq1$. Hence, \eqref{lemc4.2} together with \eqref{lemc4.3} and \eqref{lemc4.4} leads to
\begin{align}\label{lemc4.5}
\begin{split}
&\int_0^t(1+\tau)^l\|\nabla W(\tau)\|_2^{2n}\mbox{d}\tau\leq (CE_0)^{2n}\frac{1}{2n(\frac{3}{2p}-\frac{1}{4})-l-1}
\\
&+(C\delta)^{2n}\int_0^t(1+s)^l\|\nabla W(s)\|_{H^2}^{2n}\int_s^t(1+\tau-s)^{-\frac43}\mbox{d}\tau\mbox{d}s\\
&\leq (CE_0)^{2n}+(C\delta)^{2n}\int_0^t(1+s)^l\|\nabla W(s)\|_{H^2}^{2n}\mbox{d}s\\
&\leq (CE_0)^{2n}+(C\delta)^{2n}\int_0^t(1+s)^l(\|\nabla W(s)\|_{2}^{2n}+\|\nabla^2 W(s)\|_{H^1}^{2n})\mbox{d}s.
\end{split}
\end{align}
Here we have used the fact
$$2n(\frac{3}{2p}-\frac{1}{4})-l-1\geq2n(\frac{3}{2p}-\frac{1}{4})-2n(\frac{3}{2p}-\frac{1}{4})+2-1=1.$$
Thus if $\delta>0$ is sufficiently small such that $(C\delta)^{2n}\leq\frac12$ in the final inequality of \eqref{lemc4.5}, then \eqref{lemc4.5} implies  \eqref{inequality4.1}. We finish the proof of Lemma \ref{lem4.1}.
\qed

\emph{Proof of Lemma \ref{lem4.2}.} Multiplying \eqref{inequality3.6} by $n(1+t)^lM(t)^{n-1}$ for $l=0,1,\cdots, N$ and integrating it over $[0,t]$ give that
\begin{align}\label{lemcc4.7}
\begin{split}
&(1+t)^lM(t)^{n}+n\int_0^t(1+s)^lM(s)^{n-1}\|\nabla^2W(s)\|_{H^1}^2\mbox{d}s\\
&\leq M(0)^{n}+C\delta n\int_0^t(1+s)^lM(s)^{n-1}\|\nabla W(s)\|_{2}^2\mbox{d}s+l\int_0^t(1+s)^{l-1}M(s)^{n}\mbox{d}s.
\end{split}
\end{align}
For the second term on the right hand side of \eqref{lemcc4.7}, from the Young inequality, \eqref{energy} and Lemma \ref{lem4.1}, it holds that for any $\eta>0$,
\begin{align}\label{lemcc4.8}
\begin{split}
&\delta n\int_0^t(1+s)^lM(s)^{n-1}\|\nabla W(s)\|_{2}^2\mbox{d}s\\
&\leq\delta n\int_0^t(1+s)^l[\frac{n-1}{n}\eta M(s)^{n}+\frac{1}{n}\frac{1}{\eta^{n-1}}\|\nabla W(s)\|_{2}^{2n}]\mbox{d}s\\
&\leq\delta n C_2\eta\int_0^t(1+s)^l M(s)^{n-1}(\|\nabla W(s)\|_{2}^{2}+\|\nabla^2 W(s)\|_{H^1}^{2})\mbox{d}s\\
&+\delta\eta^{1-n}\int_0^t(1+s)^l\|\nabla W(s)\|_{2}^{2n}\mbox{d}s\\
&\leq \delta n C_2\eta\int_0^t(1+s)^l M(s)^{n-1}\|\nabla W(s)\|_{2}^{2}\mbox{d}s\\
&+\delta n C_2\eta\int_0^t(1+s)^l M(s)^{n-1}\|\nabla^2 W(s)\|_{H^1}^{2}\mbox{d}s\\
&+\delta\eta^{1-n}[(CE_0)^{2n}+(C\delta)^{2n}\int_0^t(1+s)^l\|\nabla^2 W(s)\|_{H^1}^{2n}\mbox{d}s]\\
&\leq \delta\eta^{1-n}(CE_0)^{2n}+\delta n C_2\eta\int_0^t(1+s)^l M(s)^{n-1}\|\nabla W(s)\|_{2}^{2}\mbox{d}s\\
&+\delta n C_2\eta\int_0^t(1+s)^l M(s)^{n-1}\|\nabla^2 W(s)\|_{H^1}^{2}\mbox{d}s\\
&+\delta\eta^{1-n}(C\delta)^{2n}C_2^{n-1}\int_0^t(1+s)^lM(s)^{n-1}\|\nabla^2 W(s)\|_{H^1}^{2}\mbox{d}s.
\end{split}
\end{align}
Choosing $\eta=\frac{1}{2C_2}$ in \eqref{lemcc4.8}, it holds that
\begin{align}\label{lemcc4.9}
\begin{split}
&\delta n\int_0^t(1+s)^lM(s)^{n-1}\|\nabla W(s)\|_{2}^2\mbox{d}s\\
&\leq2\delta (2C_2)^{n-1}(CE_0)^{2n}+\delta n[1+\frac{2}{n}(C\delta)^{2n}(2C_2^2)^{n-1}]\int_0^t(1+s)^lM(s)^{n-1}\|\nabla^2 W(s)\|_{H^1}^{2}\mbox{d}s
\\
&\leq\delta (CE_0)^{2n}+\delta n[1+(C\delta)^{2n}]\int_0^t(1+s)^lM(s)^{n-1}\|\nabla^2 W(s)\|_{H^1}^{2}\mbox{d}s.
\end{split}
\end{align}
Thus if $\delta>0$ is sufficiently small such that $C\delta\leq1$ in \eqref{lemcc4.9}, then $(C\delta)^{2n}\leq 1$ for any $n\geq 1$. And from \eqref{lemcc4.9}, it is easy to show that
 \begin{align}\label{lemcc4.10}
\begin{split}
&\delta n\int_0^t(1+s)^lM(s)^{n-1}\|\nabla W(s)\|_{2}^2\mbox{d}s
\\
&\leq(CE_0)^{2n}+2\delta n\int_0^t(1+s)^lM(s)^{n-1}\|\nabla^2 W(s)\|_{H^1}^{2}\mbox{d}s.
\end{split}
\end{align}
Similarly, we can estimate for the third term on the right hand side of \eqref{lemcc4.7} as
 \begin{align}\label{lemcc4.13}
 \begin{split}
&l\int_0^t(1+s)^{l-1}M(s)^{n}\mbox{d}s
\\&\leq(CE_0)^{2n}+\delta n(C\delta)^{2n-1}\int_0^t(1+s)^lM(s)^{n-1}\|\nabla^2 W(s)\|_{H^1}^{2}\mbox{d}s\\
&+2lC_2\int_0^t(1+s)^{l-1}M(s)^{n-1}\|\nabla^2 W(s)\|_{H^1}^{2}\mbox{d}s,
\end{split}
\end{align}
which together with \eqref{lemcc4.7} and \eqref{lemcc4.10} arrives at
\begin{align}\label{lemcc4.14}
\begin{split}
&(1+t)^lM(t)^{n}+n\int_0^t(1+s)^lM(s)^{n-1}\|\nabla^2W(s)\|_{H^1}^2\mbox{d}s\\
&\leq M(0)^{n}+(CE_0)^{2n}+\delta n[C+(C\delta)^{2n-1}]\int_0^t(1+s)^lM(s)^{n-1}\|\nabla^2 W(s)\|_{H^1}^{2}\mbox{d}s\\\
&+2lC_2\int_0^t(1+s)^{l-1}M(s)^{n-1}\|\nabla^2 W(s)\|_{H^1}^{2}\mbox{d}s.
\end{split}
\end{align}
Choose $\delta>0$ sufficiently small such that for any $n\geq 1$, it follows that
$$\delta[C+C\delta)^{2n-1}]\leq \frac12,$$
then it follows from \eqref{lemcc4.14} that
\begin{align*}
\begin{split}
&(1+t)^lM(t)^{n}+n\int_0^t(1+s)^lM(s)^{n-1}\|\nabla^2W(s)\|_{H^1}^2\mbox{d}s\\
&\leq 2M(0)^{n}+(C_3E_0)^{2n}+4lC_2\int_0^t(1+s)^{l-1}M(s)^{n-1}\|\nabla^2 W(s)\|_{H^1}^{2}\mbox{d}s,
\end{split}
\end{align*}
which gives \eqref{inequality4.2} because $l\leq N\leq 2n(\frac{3}{2p}-\frac{1}{4})$. Thus we complete the proof of Lemma \ref{lem4.2}.\qed

\emph{Proof of Theorem \ref{thm2}.} Let $\delta>0$ be small enough such that Lemma \ref{lem4.2} holds for any $n\geq2$. For any fixed integer $n\geq 2$, from Lemma \ref{lem4.2}, we get that the inequality \eqref{inequality4.2} holds for any $l=0,1,\cdots, N$. When $l=1$, \eqref{inequality4.2} reads
\begin{align}\label{lemcc4.15}
\begin{split}
&(1+t)M(t)^{n}+n\int_0^t(1+s)M(s)^{n-1}\|\nabla^2W(s)\|_{H^1}^2\mbox{d}s\\
&\leq 2M(0)^{n}+(C_3E_0)^{2n}+8C_2n(\frac{3}{2p}
-\frac{1}{4})\int_0^tM(s)^{n-1}\|\nabla^2W(s)\|_{H^1}^2\mbox{d}s.
\end{split}
\end{align}
It follows from \eqref{propc1.7} in Proposition \ref{thm1} that
\begin{align*}
\begin{split}
\int_0^tM(s)^{n-1}\|\nabla^2W(s)\|_{H^1}^2\mbox{d}s
&\leq [\sup_{s\geq0}M(s)]^{n-1}\int_0^t\|\nabla^2W(s)\|_{H^1}^2\mbox{d}s\\
&\leq (C_2C_0\delta^2)^{n-1}C_0\delta^2\leq(C_2C_0\delta^2)^{n}.
\end{split}
\end{align*}
which together with \eqref{lemcc4.15} shows that
\begin{align}\label{lemcc4.17}
\begin{split}
&(1+t)M(t)^{n}+n\int_0^t(1+s)M(s)^{n-1}\|\nabla^2W(s)\|_{H^1}^2\mbox{d}s\\
&\leq 2M(0)^{n}+(C_3E_0)^{2n}+8C_2n(\frac{3}{2p}
-\frac{1}{4})(C_2C_0\delta^2)^{n}.
\end{split}
\end{align}
For $1\leq l\leq N$, by induction one can arrive at
\begin{align}\label{lemcc4.16}
\begin{split}
&(1+t)^lM(t)^{n}+n\int_0^t(1+s)^lM(s)^{n-1}\|\nabla^2W(s)\|_{H^1}^2\mbox{d}s\\
&\leq [2M(0)^{n}+(C_3E_0)^{2n}]\sum_{i=1}^l[8C_2(\frac{3}{2p}
-\frac{1}{4})]^i+n[8C_2(\frac{3}{2p}
-\frac{1}{4})]^l(C_2C_0\delta^2)^n.
\end{split}
\end{align}
In fact, suppose that \eqref{lemcc4.16} holds for $1\leq l\leq N-1$. Then from \eqref{inequality4.2}, it holds that
\begin{align}\label{lemcc4.19}
\begin{split}
&(1+t)^{l+1}M(t)^{n}+n\int_0^t(1+s)^{l+1}M(s)^{n-1}\|\nabla^2W(s)\|_{H^1}^2\mbox{d}s\\
&\leq 2M(0)^{n}+(C_3E_0)^{2n}+8C_2n(\frac{3}{2p}
-\frac{1}{4})\int_0^t(1+s)^{l}M(s)^{n-1}\|\nabla^2W(s)\|_{H^1}^2\mbox{d}s\\
&\leq [2M(0)^{n}+(C_3E_0)^{2n}]+8C_2(\frac{3}{2p}
-\frac{1}{4})\{[2M(0)^{n}+(C_3E_0)^{2n}]\sum_{i=1}^l[8C_2(\frac{3}{2p}
-\frac{1}{4})]^{i-1}\\
&+n[8C_2(\frac{3}{2p}
-\frac{1}{4})]^l(C_2C_0\delta^2)^n\}\\
&\leq [2M(0)^{n}+(C_3E_0)^{2n}]\sum_{i=1}^{l+1}[8C_2(\frac{3}{2p}
-\frac{1}{4})]^{i-1}
+n[8C_2(\frac{3}{2p}
-\frac{1}{4})]^{l+1}(C_2C_0\delta^2)^n,
\end{split}
\end{align}
which combining with \eqref{lemcc4.17} gives that \eqref{lemcc4.16} holds for any $1\leq l\leq N$.

Specially,
\begin{align*}
(1+t)^NM(t)^n\leq [2M(0)^n+(C_3E_0)^{2n}]\frac{[8C_2(\frac{3}{2p}
-\frac{1}{4})]^N-1}{8C_2(\frac{3}{2p}
-\frac{1}{4})-1}+n[8C_2(\frac{3}{2p}
-\frac{1}{4})]^N(C_2C_0\delta^2)^n.
\end{align*}
Note that
$$2n(\frac{3}{2p}
-\frac{1}{4})-3\leq N=[2n(\frac{3}{2p}
-\frac{1}{4})-2]\leq 2n(\frac{3}{2p}
-\frac{1}{4})-1.$$
It is not difficult to prove that
$$(1+t)^{2n(\frac{3}{2p}
-\frac{1}{4})-3}\leq C^{2n(\frac{3}{2p}
-\frac{1}{4})}[M(0)^n+E_0^{2n}+\delta^{2n}],$$
which gives that
\begin{align*}
M(t)^{\frac12}\leq C[M(0)^n+E_0^{2n}+\delta^{2n}]^{\frac{1}{2n}}(1+t)^{-(\frac{3}{2p}-\frac{1}{4})+\frac{3}{2n}}.
\end{align*}
Since $M(0)$, $E_0$ and $\delta$ are independent of $n$,
one gets
\begin{align*}
[M(0)^n+E_0^{2n}+\delta^{2n}]^{\frac{1}{2n}}\rightarrow\max\{\sqrt{M(0)}, E_0, \delta\},\ \mbox{as}\  n\rightarrow\infty.
\end{align*}
The above relation implies that
\begin{align*}
M(t)^{\frac12}\leq C\max\{\sqrt{M(0)}, E_0, \delta\}(1+t)^{-(\frac{3}{2p}-\frac{1}{4})},
\end{align*}
that is,
\begin{align*}
\|\nabla W(t)\|_{H^2}\leq C\max\{\sqrt{M(0)}, E_0, \delta\}(1+t)^{-\sigma(p,2;1)},
\end{align*}
which together with Lemma \ref{2.2} implies \eqref{decay2} and \eqref{decay3}.

Now, estimate for \eqref{decay1}. For this purpose, applying \eqref{decay3}, Lemma \ref{lem1} and Lemma \ref{lem2} leads to
\begin{align*}
\|W(t)\|_2&\leq CE_0(1+t)^{-\sigma(p,2;0)}+C\int_0^t(1+t-s)^{-\sigma(p,2;0)}(\|F(W)\|_p+\|F(W)\|_2)\mbox{d}s\\
&\leq CE_0(1+t)^{-\sigma(p,2;0)}+C\delta\int_0^t(1+t-s)^{-\sigma(p,2;0)}\|\nabla W(s)\|_{H^1}\mbox{d}s\\
&\leq CE_0(1+t)^{-\sigma(p,2;0)}+C\delta\int_0^t(1+t-s)^{-\sigma(p,2;0)}(1+s)^{-\sigma(p,2;1)}\mbox{d}s\\
&\leq C(1+t)^{-\sigma(p,2;0)}.
\end{align*}
By interpolation, we have that \eqref{decay1} holds for any $2\leq q\leq 6$.

 For \eqref{decay4}, from \eqref{ke-new}, we get
 \begin{align*}
 \|\partial_tW(t)\|_2&\leq \|\gamma\mbox{div}v\|_2+\|(F_1, F_2,F_3,F_4,F_5)\|_2+\|-\gamma \nabla a+\lambda\Delta v+\lambda\nabla\mbox{div}v\|_2\\
 &+\|\lambda\Delta h\|\|_2+\|\lambda\Delta m\|_2+\|\lambda\Delta \varepsilon\|_2\\
 &\leq C(\|\nabla(a,v,h,m,\varepsilon)\|_2+\|\nabla^2(v,h,m,\varepsilon)\|_2)\\
 &\leq CE_0(1+t)^{-\sigma(p,2;1)}.
 \end{align*}
Thus, \eqref{decay4} is proved. The proof of Theorem \ref{thm2} is complete.
\qed

\end{document}